\begin{document}
\input xy
\xyoption{all}
\onehalfspacing
\begin{sloppypar}

\numberwithin{equation}{section}
%--------------Makros
%  math symbols
     %  alphabet
         %\mathbb
\newcommand{\bA}{\mathbb{A}}
\newcommand{\bB}{\mathbb{B}}
\newcommand{\bC}{\mathbb{C}}
\newcommand{\bD}{\mathbb{D}}
\newcommand{\bE}{\mathbb{E}}
\newcommand{\bF}{\mathbb{F}}
\newcommand{\bG}{\mathbb{G}}
\newcommand{\bK}{\mathbb{K}}
\newcommand{\bL}{\mathbb{L}}
\newcommand{\bN}{\mathbb{N}}
\newcommand{\bQ}{\mathbb{Q}}
\newcommand{\bP}{\mathbb{P}}
\newcommand{\bX}{\mathbb{X}}
\newcommand{\bZ}{\mathbb{Z}}
         %\mathcal
\newcommand{\cA}{\mathcal{A}}
\newcommand{\cB}{\mathcal{B}}
\newcommand{\cC}{\mathcal{C}}
\newcommand{\mD}{\mathcal{D}}
\newcommand{\cF}{\mathcal{F}}
\newcommand{\mH}{\mathcal{H}}
\newcommand{\cI}{\mathcal{I}}
\newcommand{\cK}{\mathcal{K}}
\newcommand{\mL}{\mathcal{L}}
\newcommand{\cM}{\mathcal{M}}
\newcommand{\cN}{\mathcal{N}}
\newcommand{\cO}{\mathcal{O}}
\newcommand{\cP}{\mathcal{P}}
\newcommand{\cS}{\mathcal{S}}
\newcommand{\cT}{\mathcal{T}}
\newcommand{\cU}{\mathcal{U}}
         % Greek
\newcommand{\La}{\Lambda}
\newcommand{\ve}{\mathcal{E}}
\newcommand{\vf}{\varphi}
         %mathbf
\newcommand{\tC}{\textbf{C}}
     %Hall algebra
\newcommand{\QG}{\mathbf{U}}
\newcommand{\Hall}{\mathbf{H}}
\newcommand{\SHall}{\mathbf{SDH}}
\newcommand{\DDHall}{\mathbf{DDH}}
\newcommand{\DHall}{\mathbf{DH}}
\newcommand{\mul}{\mathbf{m}}
\newcommand{\id}{\mathbf{1}}
\newcommand{\tw}{\operatorname{tw}}
\newcommand{\cw}{\operatorname{cw}}
\newcommand{\ex}{\operatorname{ex}}
\newcommand{\red}{red}
\newcommand{\g}{{\mathfrak{g}}}
\newcommand{\wh}{\widehat}  
     %rep symbols
\newcommand{\HG}{\mathrm{H}}     
\newcommand{\Proj}{\ensuremath{\mathbb{P}}}
\newcommand{\Ext}{\mathrm{Ext}}
\newcommand{\End}{\mathrm{End}}
\newcommand{\Hom}{\mathrm{Hom}}
\newcommand{\add}{\mathrm{add}}
\newcommand{\ind}{\mathrm{ind}}
\newcommand{\rad}{\mathrm{rad}}
\newcommand{\modu}{\mathrm{mod}}
\newcommand{\Aut}{\mathrm{Aut}}
\newcommand{\Iso}{\mathrm{Iso}}
\newcommand{\Gr}{\mathrm{Gr}}
\newcommand{\soc}{\operatorname{{soc}}}
\newcommand{\supp}{\operatorname{supp}\nolimits}
\newcommand{\gl}{\operatorname{gl.dim}\nolimits}
\newcommand{\im}{\operatorname{Im}\nolimits}
\newcommand{\coker}{\operatorname{Coker}}
\newcommand{\pd}{\operatorname{proj.dim}\nolimits}
\newcommand{\injd}{\operatorname{inj.dim}\nolimits}
     %graph
\newcommand{\ra}{{\rangle}}
\newcommand{\la}{{\langle}}
\newcommand{\D}{\ensuremath{\partial}}

%%%%%%
\newtheorem{theorem}{Theorem}[section]
\newtheorem*{acknowledgement}{Acknowledgements}
\newtheorem{algorithm}[theorem]{Algorithm}
\newtheorem{axiom}[theorem]{Axiom}
\newtheorem{case}[theorem]{Case}
\newtheorem{claim}[theorem]{Claim}
\newtheorem{conclusion}[theorem]{Conclusion}
\newtheorem{condition}[theorem]{Condition}
\newtheorem{conjecture}[theorem]{Conjecture}
\newtheorem{construction}[theorem]{Construction}
\newtheorem{corollary}[theorem]{Corollary}
\newtheorem{criterion}[theorem]{Criterion}
\newtheorem{definition}[theorem]{Definition}
\newtheorem{example}[theorem]{Example}
\newtheorem{exercise}[theorem]{Exercise}
\newtheorem{lemma}[theorem]{Lemma}
\newtheorem{notation}[theorem]{Notation}
\newtheorem{problem}[theorem]{Problem}
\newtheorem{proposition}[theorem]{Proposition}
\newtheorem{solution}[theorem]{Solution}
\newtheorem{summary}[theorem]{Summary}
\newtheorem*{thm}{Theorem}
\newcommand{\qbinom}[2]{\begin{bmatrix} #1\\#2 \end{bmatrix} }
\newcommand{\nc}{\newcommand}
%\nc{\browntext}[1]{\textcolor{brown}{#1}}

\allowdisplaybreaks
\theoremstyle{remark}
\newtheorem{remark}[theorem]{Remark}

\title[Semi-derived Ringel-Hall algebras and Drinfeld double]{Semi-derived Ringel-Hall bialgebras}

\author[Y. Li]{Yiyu Li}
\address{Department of Mathematics, Sichuan University, Chengdu 610064, P.R.China}
\email{liyiyumath@gmail.com}
\author[L. Peng]{Liangang Peng}
\address{Department of Mathematics, Sichuan University, Chengdu 610064, P.R.China}
\email{penglg@scu.edu.cn}

\subjclass[202  0]{18E30,16E60,17B37}
\keywords{Semi-derived Hall algebras, Drinfeld double Ringel-Hall algebras, Hereditary abelian categories, Bialgebras.}

\begin{abstract}

Let $\cA$ be an arbitrary hereditary abelian category. Lu and Peng defined the semi-derived Ringel-Hall algebra $\SHall(\cA)$ of $\cA$ and proved that $\SHall(\cA)$ has a natural basis and is isomorphic to the Drinfeld double Ringel-Hall algebra of $\cA$. In this paper, we introduce a coproduct formula on $\SHall(\cA)$ with respect to the basis of $\SHall(\cA)$ and prove that this coproduct is compatible with the product of $\SHall(\cA)$, thereby the semi-derived Ringel-Hall algebra of $\cA$ is endowed with a bialgebra structure which is identified with the bialgebra structure of the Drinfeld double Ringel-Hall algebra of $\cA$.

\end{abstract}

\maketitle

\section{Introduction}\label{sec: intro}      

     Let $\bF_q$ be a field with $q$ elements and $\cA$ be an $\bF_q$-linear exact category. The Hall algebra $\Hall(\cA)$ of $\cA$ is an associative algebra whose underlying space is a $\bC$-vector space with a basis parametrized by the isomorphic classes of objects in $\cA$. The structure constants of the product are given by certain counts of extensions. To be more specific, for any $ M,N\in \cA$, the product $[M]\ast[N]$ is given by a linear combination $\sum_{[L]\in\cA}\cF_{MN}^L[L]$, where $\cF_{MN}^L$ is the counts of the possible extensions of $M$ by $N$ in $\cA$. The concept of Hall algebras first appeared in \cite{Stein} and was considered as a tool in dealing with some questions about abelian $p$-groups for a prime number $p$. 
     
     In the 1990s, Ringel developed the theory of Hall algebras of finitary abelian categories. Since then, Hall algebras have obtained a lot of attention and have become a useful tool to establish connections between representation theory  of finite dimensional algebras and other topics, such as Lie theory, combinatorics, and algebraic geometry, etc. For a complex simple Lie algebra $\g$, Ringel  \cite{CMR90} proved that the positive part of the quantum group $\QG_v(\g)$ \cite{Dr87} of $\g$ is isomorphic to the Hall algebra $\Hall(\cA)$ of a hereditary category $\cA$, where $\cA$ is the abelian category of $\bF_q$-representations of a quiver whose underlying graph is the same as the Dynkin diagram of $\g$. 
     
     Later  Green \cite{JG95} extended this result to the Kac-Moody type. In particular, Green provided a coproduct for the Hall algebra, and then endows the Hall algebra with a bialgerba structure based on the famous Green's formula. As a consequence,  the positive part of a quantum group of Kac-Moody type is isomorphic to the composition subalgebra of the Hall algebra of the category of $\bF_q$-representations of a  corresponding quiver.

     Inspired by the work of Ringel and Green, several attempts have been made to develop Ringel-Hall algebras from appropriate categories in order to realize the entire quantum groups. In \cite{JX97}, Xiao considered the Drinfeld double of the extended Ringel-Hall algebra of an acyclic quiver $Q$ and realized the whole quantum group associated to $Q$ as the composition subalgebra of the Drinfeld double algebra.
    
    For an abelian hereditary category $\cA$ with enough projective objects, Bridgeland \cite{Bri13} considered the Hall algebra of the exact category of $\bZ/2$-graded complexes with projective components over $\cA$, and then localized it at the isomorphism classes of acyclic complexes. He proved that a subalgebra of the resulting Hall algebra (called Bridgeland's Hall algebra) of an acyclic quiver is isomorphic to the corresponding quantum group. It is stated in \cite{Bri13} and proved in \cite{Ya16} that Bridgeland's Hall algebra is isomorphic to the Drinfeld double Hall algebra of $\cA$.
    
    Recently, Lu and Peng \cite{LP20} defined the semi-derived Ringel-Hall algebra of an arbitrary hereditary category $\cA$, denoted by $\SHall_{\bZ/2}(\cA)$, to be the localization of a quotient algebra of the Ringel-Hall algebra of the category of $\bZ/2$-graded complexes. They proved that $\SHall_{\bZ/2}(\cA)$ is isomorphic to Drinfeld double Hall algebra of $\cA$. The semi-derived Hall algebra also admits a basis, parameterized by the objects of $\cA$ and the elements of its Grothendieck group $K_0(\cA)$, which serves as the foundation for many computations throughout this paper. As a consequence,  a quantum generalized Kac-Moody algebra $\QG_v(\g)$ \cite{Kang95,Bor88} can be realized as the composition subalgebra of the semi-derived Ringel-Hall algebra of a corresponding  quiver associated to $\g$ \cite{Lu23}.
     
     The bialgebra structure of the extended Hall algebra induces a bialgebra structure for the Drinfeld double Hall algebra. According to the isomorphism between semi-derived Ringel-Hall algebra and Drinfeld double Hall algebra, it is natural to inquire whether an explicit formula exists for endowing $\SHall_{\bZ/2}(\cA)$ with a bialgebra structure. This paper is devoted to giving an affirmative answer to this question.
      
     The paper is organized as follows. After recalling the basics of Ringel-Hall algebras, Drinfeld double Hall algebras, and semi-derived Ringel-Hall algebras in Section \ref{sec: priminary}, we introduce an explicit coproduct formula for the semi-derived Ringel-Hall algebra in the basis of $\SHall_{\bZ/2}(\cA)$ for an arbitrary hereditary category $\cA$ and state the main result  in Section \ref{sec: bistructure}.
      As a consequence, the bialgebra structure of $\SHall_{\bZ/2}(\cA)$ is identified with the bialgebra structure of the Drinfeld double Hall algebra as well.
     The proof will be presented in Section \ref{sec: proof}.

     \begin{acknowledgement}
The authors would like to thank Prof. Ming Lu and Prof. Changjian Fu for their discussions and thoughtful comments, which led to an improvement of the paper while it was still in preparation. The authors are also grateful to the reviewers for their careful reading of the manuscript and valuable suggestions, which greatly helped to enhance the quality of this paper.
     \end{acknowledgement}

%%%%%%%%%%%%%%
\section{Preliminaries}\label{sec: priminary}      
     Let $\bF_q$ be a finite field with $q\in\bZ$ elements and $\upsilon=q^{1/2}$.  An $\bF_q$-linear abelian category $\cA$ is called finitary if $|\Hom_{\cA}(M,N)|<\infty$ and $|\Ext^1_{\cA}(M,N)|<\infty$ for all $\  M,N\in\cA$. For an object $M\in\cA$, its projective dimension (denote by $\pd M$) is defined to be the smallest number $i\in\bN$ such that $\Ext_\cA^{i+1}(M,-)=0$; dually, one can define its injective dimension ($\injd M$). We say that $\cA$ is of global dimension at most $n$ if there exists a positive integer $n$ such that $\pd M\leq n$ and $\injd M\leq n$ for all $M\in\cA$. In this case, we denote it by $\gl\cA\leq n$. In particular, $\cA$ is called a hereditary category if $\gl\cA\leq 1$. Throughout this section, we always assume that $\cA$ is a small, finitary, abelian $\bF_q$-linear category of finite global dimension without further specification.
     
     Denote the Grothendieck group of $\cA$ by $K_0(\cA)$, for every object $M$ in $\cA$ we denote the representative of $M$ in $K_0(\cA)$ by $\wh{M}$. Since $\cA$ is a category of finite global dimension, a bilinear form $\la-,-\ra: K_0(\cA)\times K_0(\cA)\rightarrow\bZ$ on $K_0(\cA)$ is well-defined as follows:
     
     \[\la\wh{M},\wh{N}\ra:=\sum_{i\in\bZ}(-1)^i\dim\Ext_\cA^i(M,N),\]
      where $\Ext_\cA^0(M,N)=\Hom_\cA(M,N)$. We call $\la-,-\ra$ the Euler form of $\cA$. We also consider the symmetric Euler form:
     \[(-,-):K_0(\cA)\times K_0(\cA)\rightarrow\bZ,\]
      defined by $(\wh M,\wh N)=\la\wh M,\wh N\ra+\la\wh N,\wh M\ra$.
     %A quiver $Q$ is a graph with some vertices connected by some oriented arrows. Namely, $Q$ is a quadruple $(Q_0,Q_1,s,t)$, where $Q_0$, $Q_1$ are two sets which encode the vertices and arrows of $Q$ respectively, and $s,t$ are two maps from $Q_1$ to $Q_0$ provided the directions of arrows of $Q$. We always denote $kQ$ to be the path algebra over a field $k$.

     \subsection{Ringel-Hall Algebras}\label{subsec: Ringel-Hall Alg}

     Denote by $\Iso(\cA)$ the set of all isomorphism classes $[M]$ of objects $M$ in $\cA$. For three objects $M,N,R\in\cA$, $\Ext_\cA^1(M,N)_R\subset\Ext_\cA^1(M,N)$ is the subset consisting of all extensions of M by N with the middle term isomorphic to $R$. We define the Hall algebra $\Hall(\cA)$ of $\cA$ to be a $\bC$-vector space:
     \[\Hall(\cA):=\bigoplus_{[M]\in\Iso(\cA)}\bC[M],\]
     with a product
     
    \[[M]\ast [N]=\sum\limits_{R\in\Iso(\cA)} h_{MN}^R[R].\]
     where $h_{MN}^R={|\Ext^1_{\cA}(M,N)_R|}/{|\Hom_{\cA}(M,N)|}$. In \cite{CMR90}, it has been proved that $\Hall(\cA)$ is an associative algebra with unit $[0]$. To be specific, for $M,N,L,K\in\cA$, we have 
     \begin{equation}\label{eq:AssOfHA}
       \sum_{R}h_{MN}^Rh_{RL}^K=\sum_{T}h_{MT}^Kh_{NL}^T,  
     \end{equation}
    which induces the associative law of $\Hall(\cA)$.

     The twisted Hall algebra $\Hall_{\tw}(\cA)$ is the $\upsilon$-deformation of the Hall algebra which has the same underlying space as $\Hall(\cA)$, and the product of $ \Hall_{\tw}(\cA)$ is given by
     \[[M]\ast[N]=\sum\limits_{R\in\Iso(\cA)} \upsilon^{\la\widehat{M},\widehat{N} \ra} h_{MN}^R[R].\]
     $\Hall_{\tw}(\cA)$ is clearly a unital associative algebra as well \cite{CMR90}. We denote by $\Hall_{\tw}(\cA)\widehat{\otimes}\Hall_{\tw}(\cA)$ the space of formal linear combinations
      \[\sum_{[A],[B]\in\Iso(\cA)}c_{A,B}[A]\otimes[B].\]
     We assume that $\cA$ is a small, finitary $\bF_q$-linear hereditary abelian category. In \cite{JG95}, Green introduced a coproduct and counit on $\Hall_{\tw}(\cA)$ such that $\Hall_{\tw}(\cA)$ is equipped with a (topological) bialgebra structure.

     Denote $a_M=|\Aut_{\cA}(M)|$ for any object $M$. For any $n\in \bZ_{\geq 0}, M_1, ..., M_n, K\in\cA$, we recursively define  $h_{M_1M_2\cdots M_n}^K$  as follows: for  $n = 3$ , we set  $h_{M_1M_2M_3}^K := \sum_{T} h_{M_1T}^K h_{M_2M_3}^T$ ; for  $n \geq 4$ , we define  $h_{M_1M_2\cdots M_n}^K := \sum_{T} h_{M_1T}^K h_{M_2\cdots M_n}^T$. 
     \begin{theorem}\cite{JG95}\label{thm: hall bialgebra}
         %Denote $a_M=|\End_{\cA}(M)|$, 
         There exists a coproduct map $\Delta:\Hall_{\tw}(\cA)\rightarrow\Hall_{\tw}(\cA)\widehat{\otimes}\Hall_{\tw}(\cA)$ of the form
         \[\Delta([R])=\sum\limits_{M,N} \upsilon^{\la \widehat{M},\widehat{N} \ra} h_{MN}^R\frac{a_R}{a_Ma_N} [M]\otimes [N],\]
         and a counit map $\eta:\Hall_{\tw}(\cA)\rightarrow\bC$ that is given by
         \[\epsilon([M])=\delta_{[M],[0]},\]
         such that $(\Hall_{\tw}(\cA),\Delta,\epsilon )$ is a (topological) bialgebra.
     \end{theorem}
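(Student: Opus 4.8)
The plan is to unwind the bialgebra axioms and verify each in turn: that $\Hall_{\tw}(\cA)\widehat\otimes\Hall_{\tw}(\cA)$ carries a (topological, unital, associative) algebra structure for which $\Delta$ can be a morphism, that $\epsilon$ is a counit for $\Delta$ and is an algebra homomorphism, that $\Delta$ is coassociative, and finally --- the only step of substance --- that $\Delta$ is an algebra homomorphism. The multiplication on the tensor factor is the \emph{twisted} one,
\[(x\otimes y)\ast(z\otimes w)=\upsilon^{(\widehat{y},\widehat{z})}\,(x\ast z)\otimes(y\ast w)\]
on elements of pure class, extended bilinearly and then to formal sums; its associativity is immediate from that of $\Hall_{\tw}(\cA)$ together with bilinearity of $(-,-)$, and the finiteness hypotheses on $\cA$ keep the Hall product a finite sum and every coefficient below a finite sum, $\Delta$ landing in the completed tensor product. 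Throughout I abbreviate $g^{R}_{MN}:=h^{R}_{MN}\,a_{R}/(a_{M}a_{N})$, the number of subobjects $N'\subseteq R$ with $N'\cong N$ and $R/N'\cong M$, so that $\Delta([R])=\sum_{[M],[N]}\upsilon^{\la\widehat{M},\widehat{N}\ra}\,g^{R}_{MN}\,[M]\otimes[N]$.

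The counit and coassociativity axioms are routine. Since $h^{R}_{0N}=\delta_{[R],[N]}=h^{R}_{N0}$ and $a_{0}=1$, the only summand of $\Delta([R])$ with $M=0$ (resp.\ $N=0$) is $[0]\otimes[R]$ (resp.\ $[R]\otimes[0]$) with coefficient $1$, so $(\epsilon\otimes\mathrm{id})\Delta([R])=[R]=(\mathrm{id}\otimes\epsilon)\Delta([R])$; and $h^{0}_{MN}=\delta_{[M],[0]}\delta_{[N],[0]}$ gives $\Delta([0])=[0]\otimes[0]$ and $\epsilon([M]\ast[N])=\upsilon^{\la\widehat{M},\widehat{N}\ra}h^{0}_{MN}=\epsilon([M])\epsilon([N])$, so $\epsilon$ is a unital algebra map. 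For coassociativity, one expands $(\Delta\otimes\mathrm{id})\Delta([R])$ and $(\mathrm{id}\otimes\Delta)\Delta([R])$; using $\widehat{R}=\widehat{M}+\widehat{N}$ on the support together with bilinearity of the Euler form, both collapse to
\[\sum_{[M_1],[M_2],[M_3]}\upsilon^{\,\la\widehat{M_1},\widehat{M_2}\ra+\la\widehat{M_1},\widehat{M_3}\ra+\la\widehat{M_2},\widehat{M_3}\ra}\,\frac{h^{R}_{M_1M_2M_3}\,a_{R}}{a_{M_1}a_{M_2}a_{M_3}}\,[M_1]\otimes[M_2]\otimes[M_3],\]
the intermediate automorphism factor cancelling and the equality $\sum_{[T]}h^{R}_{M_1T}h^{T}_{M_2M_3}=\sum_{[S]}h^{R}_{SM_3}h^{S}_{M_1M_2}$ of the two iterated Hall numbers being precisely the associativity identity \eqref{eq:AssOfHA}.

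It remains to prove $\Delta([M]\ast[N])=\Delta([M])\ast\Delta([N])$. Expanding the left side, the coefficient of $[X]\otimes[Y]$ is $\upsilon^{\la\widehat{M},\widehat{N}\ra+\la\widehat{X},\widehat{Y}\ra}\sum_{[L]}h^{L}_{MN}h^{L}_{XY}\,a_{L}/(a_{X}a_{Y})$. Expanding the right side --- multiplying the two copies of $\Delta$ and inserting the twisted product of $[A]\otimes[B]$ with $[C]\otimes[D]$ --- the coefficient of $[X]\otimes[Y]$ is $\sum_{[A],[B],[C],[D]}\upsilon^{\,\Phi}\,h^{M}_{AB}h^{N}_{CD}h^{X}_{AC}h^{Y}_{BD}\,a_{M}a_{N}/(a_{A}a_{B}a_{C}a_{D})$ with $\Phi=(\widehat{B},\widehat{C})+\la\widehat{A},\widehat{B}\ra+\la\widehat{C},\widehat{D}\ra+\la\widehat{A},\widehat{C}\ra+\la\widehat{B},\widehat{D}\ra$. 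On the common support one has $\widehat{M}=\widehat{A}+\widehat{B}$, $\widehat{N}=\widehat{C}+\widehat{D}$, $\widehat{X}=\widehat{A}+\widehat{C}$, $\widehat{Y}=\widehat{B}+\widehat{D}$, so $\Phi-\la\widehat{M},\widehat{N}\ra-\la\widehat{X},\widehat{Y}\ra$ collapses to $-2\la\widehat{A},\widehat{D}\ra$; rewriting everything in the $g$-numbers, equality of the two coefficients is exactly \emph{Green's formula}
\[a_{M}a_{N}a_{X}a_{Y}\sum_{[L]}\frac{g^{L}_{MN}\,g^{L}_{XY}}{a_{L}}=\sum_{[A],[B],[C],[D]}\upsilon^{\,-2\la\widehat{A},\widehat{D}\ra}\,g^{M}_{AB}\,g^{N}_{CD}\,g^{X}_{AC}\,g^{Y}_{BD}\,a_{A}a_{B}a_{C}a_{D}.\]

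The crux --- and the only place hereditariness is needed --- is Green's formula, which I would prove by the standard homological double count. Its left side counts, weighted by $1/a_{L}$, objects $L$ equipped with a subobject $N\subseteq L$ with $L/N\cong M$ and a subobject $Y\subseteq L$ with $L/Y\cong X$; from such data one sets $D:=N\cap Y$, $B:=Y/D$, $C:=N/D$, $A:=L/(N+Y)$, producing a commutative diagram with exact rows $0\to D\to N\to C\to0$, $0\to Y\to L\to X\to0$, $0\to B\to M\to A\to0$ and exact columns $0\to D\to Y\to B\to0$, $0\to N\to L\to M\to0$, $0\to C\to X\to A\to0$. Conversely, the four corner sequences $0\to B\to M\to A\to0$, $0\to D\to N\to C\to0$, $0\to C\to X\to A\to0$, $0\to D\to Y\to B\to0$ reconstruct $(L,N,Y)$ after a choice of gluing data, and comparing the two ways of counting --- with the automorphism groups $\Aut(A),\dots,\Aut(L)$ all accounted for --- produces both the normalizing ratio of $a$'s and the power $\upsilon^{-2\la\widehat{A},\widehat{D}\ra}$ on the right, the latter being a $q$-power governed by the homological interplay of the ``opposite corners'' $A$ and $D$. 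The hypothesis $\gl\cA\le1$, i.e.\ $\Ext^{2}_{\cA}(-,-)=0$, enters precisely here: it makes the relevant long exact sequences of $\mathrm{Ext}$-groups terminate, so that every choice of corner data over $(A,B,C,D)$ lifts, with the expected number of lifts, to a configuration $(L,N,Y)$; without it this correspondence, and with it the compatibility of $\Delta$ and $\ast$, would break down. Granting Green's formula \cite{JG95}, the coefficient comparison above completes the proof that $(\Hall_{\tw}(\cA),\Delta,\epsilon)$ is a topological bialgebra.
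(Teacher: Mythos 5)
The paper does not actually prove this theorem: it is quoted from Green \cite{JG95}, and the key ingredient, Green's formula, is likewise only stated (as Lemma \ref{thm: green formula}) without proof. Measured against that, your proposal supplies the standard argument and its reduction is correct: the counit and coassociativity checks are right (the latter correctly rests on the associativity identity \eqref{eq:AssOfHA}), your twisted multiplication $(x\otimes y)\ast(z\otimes w)=\upsilon^{(\widehat{y},\widehat{z})}(xz)\otimes(yw)$ is the right one for this normalization, and your exponent bookkeeping $\Phi-\la\widehat{M},\widehat{N}\ra-\la\widehat{X},\widehat{Y}\ra=-2\la\widehat{A},\widehat{D}\ra$ checks out, so that compatibility of $\Delta$ with $\ast$ becomes, after translating your $g$-numbers back into $h$-numbers, precisely the identity of Lemma \ref{thm: green formula} with $(K_1,K_2,M_1,M_2,N_1,N_2)=(X,Y,A,B,C,D)$. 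The one place where your text should not be mistaken for a proof is the final paragraph: the double-count of configurations $(L,N\subseteq L,Y\subseteq L)$ versus corner data $(A,B,C,D)$ is only a heuristic as written --- the genuinely hard part of Green's argument is the exact count of ``gluing data'' over a fixed crossing diagram, where the factor $\upsilon^{-2\la\widehat{A},\widehat{D}\ra}=|\Hom(A,D)|/|\Ext^1_{\cA}(A,D)|$ arises and where hereditariness is used, and phrases like ``with the expected number of lifts'' conceal exactly that computation. Since you explicitly defer to \cite{JG95} for this step, as the paper itself does, the proposal is acceptable as a proof modulo the cited formula, and is in fact more informative than the paper's treatment.
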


     The following lemma is crucial for the proof theorem \ref{thm: hall bialgebra}.
     \begin{lemma}%\cite{JG95}
     \label{thm: green formula}[Green's Formula]
         Let $\cA$ be a hereditary category, for all $M,N,K_1,K_2 \in A$
         %, denote $a_M=|\End_{\cA}(M)|$, 
         We have 
         $$\sum\limits_{K} a_Kh^{K}_{MN}h^{K}_{K_1K_2}=\sum\limits_{M_1,M_2,N_1,N_2} \upsilon^{-2\la \widehat{M_1},\widehat{N_2} \ra}\frac{a_Ma_Na_{K_1}a_{K_2}}{a_{M_1}a_{M_2}a_{N_1}a_{N_2}}h^M_{M_1M_2}h^N_{N_1N_2}h^{K_1}_{M_1N_1}h^{K_2}_{M_2N_2}.$$
     \end{lemma}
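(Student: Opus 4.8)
The first step is to invoke the Riedtmann--Peng identity $a_K\, h^K_{MN}=a_M a_N\, g^K_{MN}$, in which $g^K_{MN}$ denotes the number of subobjects $U\subseteq K$ with $U\cong N$ and $K/U\cong M$. This lets one replace every Hall number occurring on either side of the asserted equality by a count of honest short exact sequences, so that both sides become a (differently weighted) count of the set $\mathcal{X}$ of commutative diagrams
\[
\begin{array}{ccccccccc}
 & & 0 & & 0 & & 0 & & \\
 & & \downarrow & & \downarrow & & \downarrow & & \\
0 & \to & N_2 & \to & K_2 & \to & M_2 & \to & 0 \\
 & & \downarrow & & \downarrow & & \downarrow & & \\
0 & \to & N & \to & K & \to & M & \to & 0 \\
 & & \downarrow & & \downarrow & & \downarrow & & \\
0 & \to & N_1 & \to & K_1 & \to & M_1 & \to & 0 \\
 & & \downarrow & & \downarrow & & \downarrow & & \\
 & & 0 & & 0 & & 0 & &
\end{array}
\]
with exact rows and columns, in which $M,N,K_1,K_2$ are the prescribed objects while $K$, the corner objects $M_1,M_2,N_1,N_2$, and all the remaining morphisms are allowed to vary.

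To recover the left-hand side I would forget everything in such a diagram except the middle row $0\to N\to K\to M\to 0$ and the middle column $0\to K_2\to K\to K_1\to 0$. By the nine lemma in $\cA$, a pair consisting of one such row and one such column with common middle term $K$ extends to a diagram of $\mathcal{X}$ in exactly one way --- $N_2$ being the pullback $N\times_K K_2$, the object $M_1$ the cokernel of $N\oplus K_2\to K$, and $N_1,M_2$ the induced quotients, all morphisms being the canonical ones --- so this forgetful map is a bijection. A short computation with the Riedtmann--Peng identity then identifies the corresponding weighted count of $\mathcal{X}$ (each diagram carrying the weight $a_K^{-1}$ coming from the automorphisms of its central object) with $\sum_K a_K h^K_{MN}h^K_{K_1K_2}$.

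To recover the right-hand side I would instead forget the central object $K$ together with the four morphisms adjacent to it, keeping the four boundary short exact sequences $0\to M_2\to M\to M_1\to 0$, $0\to N_2\to N\to N_1\to 0$, $0\to N_1\to K_1\to M_1\to 0$ and $0\to N_2\to K_2\to M_2\to 0$. After the Riedtmann--Peng substitution, the weighted count of such boundary data with fixed corner objects $M_1,M_2,N_1,N_2$ is precisely $\frac{a_M a_N a_{K_1}a_{K_2}}{a_{M_1}a_{M_2}a_{N_1}a_{N_2}}\,h^M_{M_1M_2}h^N_{N_1N_2}h^{K_1}_{M_1N_1}h^{K_2}_{M_2N_2}$, apart from the power of $\upsilon$. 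What remains is to count, again with the weight $a_K^{-1}$, the completions of a single fixed boundary configuration back to a diagram of $\mathcal{X}$; this fibre count is the technical heart of the proof. A direct diagram chase organises these completions by the groups $\Hom(M_1,N_2)$ and $\Ext^1(M_1,N_2)$ between the two ``opposite'' corner objects, and since $\cA$ is hereditary there is no $\Ext^{\ge 2}$ obstruction; once the automorphism orders cancel, the weighted fibre count evaluates to $q^{\dim\Ext^1(M_1,N_2)-\dim\Hom(M_1,N_2)}(a_{M_1}a_{M_2}a_{N_1}a_{N_2})^{-1}=\upsilon^{-2\la\widehat{M_1},\widehat{N_2}\ra}(a_{M_1}a_{M_2}a_{N_1}a_{N_2})^{-1}$, where we used $\la\widehat{M_1},\widehat{N_2}\ra=\dim\Hom(M_1,N_2)-\dim\Ext^1(M_1,N_2)$. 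Summing over boundary configurations and then over all corner objects reassembles the right-hand side, so the two weighted counts of $\mathcal{X}$ agree. The main obstacle is exactly this fibre-counting lemma --- enumerating the completions of a fixed boundary configuration, checking that the automorphism weights cancel, and extracting the precise power of $q$ --- which is where the genuine work lies and where the hypothesis $\gl\cA\le 1$ is essential.
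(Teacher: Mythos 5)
This lemma is Green's formula; the paper does not prove it but imports it from \cite{JG95}, so there is no internal proof to compare you against. Judged on its own, your sketch reproduces the standard architecture of Green's original argument (also in Ringel's and Hubery's expositions): convert the $h$'s to subobject counts via Riedtmann--Peng, interpret the left side as a $1/a_K$-weighted count of ``crosses'' (a middle row and a middle column sharing the object $K$), interpret the right side as a weighted count of boundary data of $3\times 3$ exact diagrams, and match the two by computing the fibres of the two forgetful maps. Your bookkeeping is consistent with the paper's normalization: with $a_Kh^K_{MN}=a_Ma_Ng^K_{MN}$ the left side is exactly $a_Ma_Na_{K_1}a_{K_2}\sum_K g^K_{MN}g^K_{K_1K_2}/a_K$, the boundary count with fixed corner objects is $a_{M_1}^2a_{M_2}^2a_{N_1}^2a_{N_2}^2\,g^M g^N g^{K_1}g^{K_2}$, and the claimed fibre value $\upsilon^{-2\la \widehat{M_1},\widehat{N_2}\ra}(a_{M_1}a_{M_2}a_{N_1}a_{N_2})^{-1}$ makes the totals agree with the stated right-hand side. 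The orientation of the diagram and the identification of the Euler-form correction with the pair of opposite corners $(M_1,N_2)$ are also correct.

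The genuine gap is the step you yourself flag: the fibre-counting lemma is not a ``direct diagram chase'' but is essentially the entire content of Green's theorem. One must show that the set of completions of a fixed boundary configuration to a cross carries a free action related to $\Hom(M_1,N_2)$, that the isomorphism classes of possible middle terms $K$ are governed by $\Ext^1(M_1,N_2)$ via an explicit long-exact-sequence argument (this is precisely where $\gl\cA\le 1$ is used, to kill the $\Ext^2$ obstruction to assembling $K$ from the boundary), and that after dividing by $a_K$ the automorphism factors cancel to leave exactly $q^{-\la \widehat{M_1},\widehat{N_2}\ra}(a_{M_1}a_{M_2}a_{N_1}a_{N_2})^{-1}$; this occupies several pages in \cite{JG95}. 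A smaller but related point: your ``nine lemma gives a unique extension'' claim holds only if the corner entries of a cross are taken to be the canonical subquotients $N\cap K_2$, $K/(N+K_2)$, etc., whereas the boundary count fixes the corners as chosen representatives of isomorphism classes; reconciling these two conventions is exactly where the residual factor $a_{M_1}a_{M_2}a_{N_1}a_{N_2}$ must be tracked, and it is currently absorbed silently into the unproved fibre count. So the proposal is a faithful roadmap of the correct proof, but it is not yet a proof.
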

     The following is a direct consequence of Lemma \ref{thm: green formula} and equation (\ref{eq:AssOfHA}) which is useful in the proof of theorem \ref{thm: bialgebra}.

     \begin{corollary}\label{cor: green formula-3,2}
              Let $\cA$ be an hereditary category, for all $M,N,K_1,K_2,C \in A$, % denote $a_M=|\End_{\cA}(M)|$,  
         we have 
         \begin{eqnarray*}
         %row 1
         &\sum\limits_{K}& a_Kh^{K}_{MN}h^K_{K_1CK_2}\\
         %row 2
&=&\sum\limits_{\substack{M_1,C_1,M_2\\N_1,N_2,C_2}}\upsilon^{-2\la \widehat{M_1},\widehat{C_2}+\widehat{N_2} \ra-2\la \widehat{C_1},\widehat{N_2} \ra}\frac{a_Ma_Na_{K_1}a_{K_2}a_C}{a_{M_1}a_{N_1}a_{C_1}a_{M_2}a_{C_2}a_{N_2}} h^M_{M_1C_1M_2}h^N_{N_1C_2N_2}h^{K_1}_{M_1N_1}h^C_{C_1C_2}h^{K_2}_{M_2N_2}.
         \end{eqnarray*}
     \end{corollary}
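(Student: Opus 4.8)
The plan is to reduce the statement to two successive applications of Green's formula (Lemma \ref{thm: green formula}), one for each of the two columns that still need to be split after the first application; throughout, $\cA$ is hereditary, so Green's formula is available, and I will freely use the recursive definition of the iterated Hall numbers together with the associativity (\ref{eq:AssOfHA}) that makes them independent of the bracketing.

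First I would unfold the triple Hall number by its recursive definition, $h^K_{K_1CK_2}=\sum_S h^K_{K_1S}h^S_{CK_2}$, so that
\[
\sum_K a_Kh^K_{MN}h^K_{K_1CK_2}=\sum_S\Bigl(\sum_K a_Kh^K_{MN}h^K_{K_1S}\Bigr)h^S_{CK_2}.
\]
Applying Lemma \ref{thm: green formula} to the inner sum, with $(M,N,K_1,S)$ playing the role of $(M,N,K_1,K_2)$, rewrites $\sum_K a_Kh^K_{MN}h^K_{K_1S}$ as a sum over $M_1,M_2,N_1,P$ of
\[
\upsilon^{-2\la\wh{M_1},\wh{P}\ra}\,\frac{a_Ma_Na_{K_1}a_S}{a_{M_1}a_{M_2}a_{N_1}a_{P}}\,h^M_{M_1M_2}h^N_{N_1P}h^{K_1}_{M_1N_1}h^S_{M_2P}.
\]
Substituting this back and performing the sum over $S$, the remaining factor $a_Sh^S_{M_2P}h^S_{CK_2}$ is again exactly of the shape appearing on the left-hand side of Green's formula, now with $(M_2,P,C,K_2)$ in the role of $(M,N,K_1,K_2)$; a second application replaces it by a sum over $C_1,M_2',C_2,N_2$ of
\[
\upsilon^{-2\la\wh{C_1},\wh{N_2}\ra}\,\frac{a_{M_2}a_Pa_Ca_{K_2}}{a_{C_1}a_{M_2'}a_{C_2}a_{N_2}}\,h^{M_2}_{C_1M_2'}h^P_{C_2N_2}h^C_{C_1C_2}h^{K_2}_{M_2'N_2}.
\]

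It then remains to clean up. The factors $a_{M_2}$ and $a_P$ produced by the second application cancel against the same symbols in the denominator coming from the first, leaving the automorphism ratio $a_Ma_Na_{K_1}a_{K_2}a_C/(a_{M_1}a_{N_1}a_{C_1}a_{M_2'}a_{C_2}a_{N_2})$. Summing over the now-free variable $M_2$ collapses $\sum_{M_2}h^M_{M_1M_2}h^{M_2}_{C_1M_2'}$ into the triple Hall number $h^M_{M_1C_1M_2'}$, and likewise $\sum_P h^N_{N_1P}h^P_{C_2N_2}=h^N_{N_1C_2N_2}$; this is the point at which the recursive definition of triple Hall numbers and (\ref{eq:AssOfHA}) are invoked. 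For the $\upsilon$-exponent, since $h^P_{C_2N_2}$ is nonzero only when $\wh{P}=\wh{C_2}+\wh{N_2}$, on the support one has $\la\wh{M_1},\wh{P}\ra=\la\wh{M_1},\wh{C_2}+\wh{N_2}\ra$, so the accumulated exponent $-2\la\wh{M_1},\wh{P}\ra-2\la\wh{C_1},\wh{N_2}\ra$ becomes $-2\la\wh{M_1},\wh{C_2}+\wh{N_2}\ra-2\la\wh{C_1},\wh{N_2}\ra$, exactly as in the statement. Renaming $M_2'$ as $M_2$ gives the claimed identity.

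The main obstacle is organizational rather than conceptual: one must keep careful track of the (eventually six) summation variables and, crucially, apply Green's formula both times with the arguments in the correct order, since the exponent $\la\wh{M_1},\wh{N_2}\ra$ is asymmetric in its two slots and reversing the roles would produce the wrong $\upsilon$-power. An equally valid but differently bracketed route, starting from $h^K_{K_1CK_2}=\sum_S h^K_{SK_2}h^S_{K_1C}$, would reach the same answer by associativity and serves as a consistency check. The only genuinely non-mechanical step is rewriting the exponent via the support condition on $P$; everything else is bookkeeping of automorphism factors.
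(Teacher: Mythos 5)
Your proposal is correct and is precisely the argument the paper intends: the corollary is asserted there as a direct consequence of Green's formula and the associativity relation (\ref{eq:AssOfHA}), and your two successive applications of Lemma \ref{thm: green formula} (with the $a_S$ from the first application feeding the left-hand side of the second), followed by recombining $\sum_{M_2}h^M_{M_1M_2}h^{M_2}_{C_1M_2'}$ and $\sum_P h^N_{N_1P}h^P_{C_2N_2}$ into triple Hall numbers and rewriting $\wh{P}=\wh{C_2}+\wh{N_2}$ on the support, fill in exactly the omitted details. The bookkeeping of automorphism factors and the $\upsilon$-exponent all check out against the stated formula.
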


     \subsection{Extended Hall algebras and Drinfeld Double}For a hereditary category $\cA$, the extended Hall algebra $\Hall^{\ex}_{\tw}(\cA)^+$ of $\cA$ is an associative algebra with the underlying vector space $\Hall_{\tw}(\cA)\otimes_{\bC} \bC[K_0(\cA)]$, where $\bC[K_0(\cA)]$ is the group algebra of the Grothendieck group $K_0(\cA)$. Its product is defined to be
     \[[M]k_\alpha\ast[N]k_\beta=\sum\limits_{[R]\in\Iso(\cA)}\upsilon^{\la\widehat{M},\widehat{N}\ra+(\alpha,\wh{N})} h_{MN}^R[R]k_{\alpha+\beta},\]
     with the unit $[0]$. In \cite{JX97}, the extended Hall algebra $\Hall^{\ex}_{\tw}(\cA)^+$ has a bialgebra structure whose coproduct is given by
     \begin{align*}
     \Delta^+([R])=&\sum\limits_{M,N} \upsilon^{\la \widehat{M},\widehat{N} \ra} h_{MN}^R\frac{a_R}{a_Ma_N} [M]k_{\widehat{N}}\otimes [N],\\
     \Delta^+(k_\alpha)=& k_\alpha\otimes k_\alpha,
     \end{align*}
     while the counit is defined as  $\epsilon([R])=0$ and $\epsilon(k_\alpha)=1$ for $R\neq 0\in\cA$ and $ \alpha\in K_0(\cA)$. 
      Dually, let $\Hall^{\ex}_{\tw}(\cA)^-$ be the same vector space as the underlying space of $\Hall^{\ex}_{\tw}(\cA)^+$. The bialgebra structure of $\Hall^{\ex}_{\tw}(\cA)^-$ is given as follows
     \begin{align*}[M]k_\alpha\ast[N]k_\beta=&\sum\limits_{[R]\in\Iso(\cA)}\upsilon^{\la\widehat{M},\widehat{N}\ra-(\alpha,\wh{N})} h_{MN}^R[R]k_{\alpha+\beta},\\
     \Delta^-([R]):=&\sum\limits_{M,N} \upsilon^{\la \widehat{M},\widehat{N} \ra} h_{MN}^R\frac{a_R}{a_Ma_N} [N]\otimes [M]k_{\wh{N}},\\
     \Delta^-(k_\alpha)=& k_\alpha\otimes k_\alpha,
     \end{align*}
     with unit $[0]$ and counit $\epsilon([R])=0$ and $\epsilon(k_\alpha)=1$ for $R\neq 0\in\cA$ and $ \alpha\in K_0(\cA)$. We denote by $\Hall^{\ex}_{\tw}(\cA)^+\widehat{\otimes}\Hall^{\ex}_{\tw}(\cA)^-$ the space of formal linear combinations
      \[\sum_{[A],[B]\in\Iso(\cA),\alpha,\beta\in K_0{(\cA)}}c_{A,B}[A]k_\alpha\otimes[B]k_\beta.\]
      There is a bilinear pairing $\varphi:\Hall^{\ex}_{\tw}(\cA)^+\times\Hall^{\ex}_{\tw}(\cA)^-\rightarrow\bC$  given by
      \[\varphi([M]k_\alpha, [N]k_\beta)=\upsilon^{(\alpha,\beta)}\delta_{[M],[N]}a_M,\]
      which is a Hopf pairing \cite{Jo95} ,i.e., it satisfies
       \[\varphi([M]k_\alpha\ast[N]k_\beta,[R]k_\delta)=\vf([M]k_\alpha\otimes[N]k_\beta,\Delta([R]k_\delta)),\]
      where $\varphi(x\otimes x', y\otimes y')=\varphi(x,y)\varphi(x',y')$ for $x,x'\in\Hall^{\ex}_{\tw}(\cA)^+,y,y'\in \Hall^{\ex}_{\tw}(\cA)^-$. As a consequence, we have the following result.
      \begin{theorem}\cite{JX97,Jo95}\label{thm: DDH(A)}
      There is an algebra structure on $\Hall^{\ex}_{\tw}(\cA)\widehat{\otimes}\Hall^{\ex}_{\tw}(\cA)$ given by
      \begin{itemize}
      \item[(D1)] $(x\otimes 1)(x'\otimes 1)=xx'\otimes 1,$
      \item[(D2)] $(1\otimes y)(1\otimes y')=1\otimes yy',$
      \item[(D3)] $(x\otimes 1)(1\otimes y)=x\otimes y,$
      \item[(D4)] $\sum\vf(x_{(2)}, y_{(1)})x_{(1)}\otimes  y_{(2)}=\sum\vf(x_{(1)}, y_{(2)})(1\otimes  y_{(1)})(x_{(2)}\otimes 1),$
      \end{itemize}
      where we use Sweedler’s notation $\Delta^+(x)=\sum x_{(1)}\otimes x_{(2)}$ and $\Delta^-(y)=\sum y_{(1)}\otimes y_{(2)}$. The algebra $\Hall^{\ex}_{\tw}(\cA)^+\widehat{\otimes}\Hall^{\ex}_{\tw}(\cA)^-$ is called the Drinfeld double Hall algebra of $\cA$ , denoted by $\DDHall(\cA)$. Furthermore, $\DDHall(\cA)$ is a bialgebra with counit $\epsilon\otimes\epsilon$ and coproduct $\Delta(a\otimes b)=\Delta^+(a)\otimes\Delta^-(b)$. 
      \end{theorem}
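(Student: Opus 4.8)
The plan is to recognise Theorem~\ref{thm: DDH(A)} as an instance of Drinfeld's quantum double construction applied to the skew-Hopf-paired bialgebras $\Hall^{\ex}_{\tw}(\cA)^+$ and $\Hall^{\ex}_{\tw}(\cA)^-$, and to verify the double axioms (D1)--(D4) and the bialgebra axioms directly in this setting. Write $H^{\pm}=\Hall^{\ex}_{\tw}(\cA)^{\pm}$. The ingredients are already assembled above: $H^+$ and $H^-$ are bialgebras with coproducts $\Delta^{\pm}$ and counit $\epsilon$, and $\varphi$ is a Hopf pairing between them; moreover the explicit formulas for $\Delta^{\pm}$, for $h^R_{MN}$, and for $\varphi$ show that each of the sums occurring below is finite in every fixed $K_0(\cA)$-bidegree, so they converge in the relevant completed tensor products. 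Since the grouplike elements $k_{\alpha}$ are invertible and the Hall part of $H^{\pm}$ is connected $K_0(\cA)$-graded, $H^{\pm}$ are in fact Hopf algebras carrying (completed) antipodes $S^{\pm}$, which I use to exhibit an explicit product.

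First I would pin down the product. Declare $[0]\otimes[0]$ the unit and impose (D1)--(D3); then, via (D3), every element of $H^+\widehat{\otimes}H^-$ is a formal combination of elements $x\otimes y=(x\otimes 1)(1\otimes y)$, so the product of two such is determined once one knows how to straighten $(1\otimes y)(x\otimes 1)$ back into this normal form. I would set
\[
(1\otimes y)(x\otimes 1):=\sum \varphi\big(S^+(x_{(1)}),y_{(1)}\big)\,\varphi\big(x_{(3)},y_{(3)}\big)\,\big(x_{(2)}\otimes y_{(2)}\big)
\]
(iterated Sweedler notation, $\Delta^+$ applied to $x$ and $\Delta^-$ to $y$), check that the right-hand side indeed lies in $H^+\widehat{\otimes}H^-$, and then verify, using the convolution identities $\sum\varphi(S^+(x_{(1)}),y_{(1)})\varphi(x_{(2)},y_{(2)})=\epsilon(x)\epsilon(y)=\sum\varphi(x_{(1)},y_{(1)})\varphi(S^+(x_{(2)}),y_{(2)})$ together with coassociativity of $\Delta^{\pm}$, that this formula is exactly the (unique) solution of (D4). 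Thus (D1)--(D4) define a bilinear operation on $H^+\widehat{\otimes}H^-$ unambiguously, with $H^+\otimes 1$ and $1\otimes H^-$ as subalgebras.

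For associativity it suffices to test on triples whose factors all lie in $H^+\otimes 1$ or $1\otimes H^-$; every such test reduces to associativity of $H^+$ or of $H^-$ except for the genuinely mixed triples, and there --- after substituting the straightening rule and expanding --- the identities required are coassociativity of $\Delta^{\pm}$, the two multiplicativity properties of $\varphi$ (the Hopf-pairing identity stated above together with its mirror on the $H^-$ side) and the antipode axioms for $S^{\pm}$; this is a finite and essentially mechanical computation. For the bialgebra assertion, put $\varepsilon=\epsilon\otimes\epsilon$ and $\Delta=(\mathrm{id}\otimes\tau\otimes\mathrm{id})\circ(\Delta^+\otimes\Delta^-)$ with $\tau$ the flip (this is the map written $\Delta(a\otimes b)=\Delta^+(a)\otimes\Delta^-(b)$ in the statement). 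Counitality of $\varepsilon$ follows from counitality of $\epsilon$ on each tensor factor together with $\varphi(1,-)=\epsilon=\varphi(-,1)$, which also annihilates the correction terms in the straightening rule; and $\Delta$ is an algebra homomorphism because it is so on $H^+\otimes 1$ and on $1\otimes H^-$ (as $\Delta^{\pm}$ are algebra maps), while on a mixed product one applies $\Delta$ to the straightening rule and invokes the compatibility of $\varphi$ with $\Delta^{\pm}$ to recover the straightening rule in $\DDHall(\cA)\widehat{\otimes}\DDHall(\cA)$.

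I expect the main obstacle to be twofold and combinatorial rather than conceptual: first, confirming that the straightening formula for $(1\otimes y)(x\otimes 1)$ really takes values in the \emph{completed} tensor product --- that is, that only finitely many summands contribute to each $K_0(\cA)$-bidegree --- which is precisely where the finitary and hereditary hypotheses on $\cA$ enter, through the explicit shapes of $h^R_{MN}$, $\Delta^{\pm}$ and $\varphi$; and second, the mixed-term associativity and comultiplicativity bookkeeping, which is routine but long once the Hopf-pairing axioms are in hand. One could also bypass the explicit straightening by realising $\DDHall(\cA)$ as a bicrossed product of $H^+$ and $H^-$ and quoting the general quantum-double theorem, whereupon everything reduces to the already-established fact that $\varphi$ is a (skew-)Hopf pairing.
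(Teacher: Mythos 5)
This theorem is imported by the paper from \cite{JX97,Jo95} without proof, so there is no internal argument to compare against; your proposal is, in substance, the standard quantum-double construction that those references carry out, and as a strategy it is sound. Two ingredients that you treat as available should be flagged, because the paper itself does not supply them and they are not formal trivialities. First, the text only records one of the two adjointness identities for $\varphi$ (multiplicativity in the first slot against $\Delta^-$ of the second); your argument also needs the mirror identity $\varphi(x,yy')=\varphi(\Delta^+(x),y\otimes y')$ together with $\varphi(1,-)=\epsilon$, $\varphi(-,1)=\epsilon$, and for the Hall pairing the two adjointness identities are each equivalent to Green's formula (Lemma~\ref{thm: green formula}), so this is a genuine input rather than bookkeeping. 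Second, the existence of the antipodes $S^{\pm}$ is not a consequence of ``connectedness'' in the usual sense, since $\Hall^{\ex}_{\tw}(\cA)^{\pm}$ contains the group algebra $\bC[K_0(\cA)]$ in degree zero; what actually makes the recursive construction of $S^{\pm}$ work is that $\Delta^{+}([R])-[R]\otimes 1-k_{\widehat{R}}\otimes[R]$ only involves classes of proper sub- and quotient objects of $R$, which is Xiao's argument in \cite{JX97}. Finally, your straightening formula should be checked carefully against the crossed placement of legs in (D4) (which pairs $x_{(1)}$ with $y_{(2)}$ but $x_{(2)}$ with $y_{(1)}$); the correct solution pairs the antipode leg of $x$ against the appropriate outer leg of $y$, and getting the indices wrong is the classic pitfall here --- you do say you would verify that your formula solves (D4), which is the right safeguard. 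With those points supplied, the proposal reproduces the cited proof; the alternative you mention (quoting the general bicrossed-product theorem for skew-Hopf-paired bialgebras) is exactly the route of \cite{Jo95}.
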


\subsection{$\bZ/2$-graded complexes}\label{subsec: SDH alg}

     Let $\cC_{\bZ/2}(\cA)$ be the category of $\bZ/2$-graded complexes over $\cA$. Namely, an object of such category is
     \[
     \begin{tikzcd}
     M^\bullet:M^0 \arrow[r,->, >=Stealth, shift left, "d_M^0"] & M^1 \arrow[l,->, >=Stealth,shift left, left,"d_M^1"],
     \end{tikzcd}
     \]
     with $d^0_Md^1_M=0,d^1_Md^0_M=0$, where $M^0,M^1\in\cA$, $d^0_M\in\Hom_\cA(M^0,M^1)$, $d^0_M\in\Hom_\cA(M^1,M^0)$, $d^0_M,d^1_M$ are called differentials of $M^\bullet$. The $i$-th homology group $\HG^i(M^\bullet)$ of $M^\bullet$ is defined to be
     $$\HG^i(M^\bullet)=\ker d^i_M/\im d^{i-1}_M,$$
     for any $i\in\bZ/2\bZ$. A morphism $f^\bullet=(f^0,f^1):M^\bullet\rightarrow N^\bullet$ is a pair of morphisms in $\cA$ which fit into the following  diagram
     \[
     \begin{tikzcd}
         M^0 \arrow[r,->, >=Stealth, shift left, "d_M^0"]\arrow[d,->, >=Stealth,"f^0"] & M^1 \arrow[l,->, >=Stealth, shift left,"d_M^1"]\arrow[d,->, >=Stealth,"f^1"]\\
         N^0 \arrow[r,->, >=Stealth, shift left, "d_N^0"] & N^1 \arrow[l,->, >=Stealth, shift left,"d_N^1"],\\
     \end{tikzcd}
     \]
      such that $f^1d_M^0=d_N^0f^0$, $f^0d_M^1=d_N^1f^1$. It is self-evident that $\cC_{\bZ/2}(\cA)$ is an abelian category. In $\cC_{\bZ/2}(\cA)$, for a $\bZ/2$-graded complex $M^\bullet$, the action of the shift functor $\star$ on $M^\bullet$ is defined as $(M^\bullet)^\star=(M^1\rightleftarrows M^0)$ which shifts the grading and changes the signs of the differentials of $M^\bullet$. For $X\in\cA$, we denote
     %\begin{align*}
     %K_X:=&(\xymatrix{ X \ar@<0.5ex>[r]^{1}& X \ar@<0.5ex>[l]^{0}  }),\qquad \,\, K_X^*:=(\xymatrix{ X \ar@<0.5ex>[r]^{0}& X \ar@<0.5ex>[l]^{1}  }),
     %\\
     %C_X:=&(\xymatrix{ 0 \ar@<0.5ex>[r]& X \ar@<0.5ex>[l]  }),\qquad \quad C_X^*:=(\xymatrix{ X\ar@<0.5ex>[r]& 0 \ar@<0.5ex>[l]  })
     %\end{align*}
     {\[
     \begin{tikzcd}
         K_X:=(~X \arrow[r,->, >=Stealth,shift left, "1"] & X~), \arrow[l,->, >=Stealth,shift left, left,"0"]&K^\star_X:=(~X \arrow[r,->, >=Stealth,shift left, "0"] & X~), \arrow[l,->, >=Stealth,shift left, left,"1"]
         \\
         C_X:=(~0 \arrow[r,->, >=Stealth,shift left, ""] & X~), \arrow[l,->, >=Stealth,shift left, left,""]&C^\star_X:=(~X \arrow[r,->, >=Stealth,shift left, ""] & 0~). \arrow[l,->, >=Stealth,shift left, left,""],
     \end{tikzcd}
     \]}

     %A $\bZ/2$-graded complex $M^\bullet$ is acyclic if $\Hh^i(M^\bullet)=0$ for $i=1,2$. For the full subcategory $\cC_{\bZ/2}(\cA)\subset\cC_{\bZ/2,ac}(\cA)$ consisted of all acyclic $\bZ/2$-graded complexes, there is bilinear Euler form on the Grothendieck group $K_0(\cC_{\bZ/2}(\cA))$ and $K_0(\cC_{\bZ/2,ac}(\cA))$, for $\wh{K}\in K_0(\cC_{\bZ/2}(\cA)$ and $\wh{A}\in K_0(\cC_{\bZ/2,ac}(\cA)$, denoted by same symbol:

     %$$\la-,-\ra:K_0(\cC_{\bZ/2}(\cA))\times K_0(\cC_{\bZ/2,ac}(\cA))\rightarrow\bC.$$
     %$$\la-,-\ra:K_0(\cC_{\bZ/2,ac}(\cA))\times K_0(\cC_{\bZ/2}(\cA))\rightarrow\bC.$$
     
    The component-wise Euler form $\la-,-\ra_{\cw}:\Iso(\cC_{\bZ/2}(\cA))\times\Iso(\cC_{\bZ/2}(\cA))\rightarrow\bC$ is defined to be 
    \[\la [M^\bullet],[N^\bullet] \ra_{\cw}=\la M^0,N^0 \ra+\la M^1,N^1 \ra,\]
    for $[M^\bullet], [N^\bullet]\in\Iso(\cC_{\bZ/2}(\cA))$. The component-wise Euler form clearly induces  a bilinear form on $K_0(\cC_{\bZ/2}(\cA))$,  which we still denote by $\la-,-\ra_{\cw}$. We denote the representative of $M^\bullet$ in $K_0(\cC_{\bZ/2}(\cA))$ by $\wh{M^\bullet}$, thus $\la [M^\bullet],[N^\bullet] \ra_{\cw}$ can be rewritten as $\la \wh{M^\bullet},\wh{N^\bullet} \ra_{\cw}$.  

     \subsection{semi-derived Ringel-Hall algebras} Let $\Hall(\cC_{\bZ/2}(\cA))$ be the Hall algebra of $\cC_{\bZ/2}(\cA)$ with product given by 
$$[M^\bullet]\ast[N^\bullet]=\sum\limits_{[R^\bullet]\in\Iso({\cC_{\bZ/2}(\cA)})}\upsilon^{\la \widehat{M^\bullet},\widehat{N^\bullet} \ra_{\cw}}h^{R^\bullet}_{M^\bullet N^\bullet}[R^\bullet].$$

     The algebra $\Hall(\cC_{\bZ/2}(\cA))$ is well-known to be a $K_0(\cC_2(\cA))$-graded algebra. Let $I_{\bZ/2}$ be the two-sided ideal generated by the set
     
     $$\{ [L]-[K\oplus M]|0\rightarrow K\rightarrow L\rightarrow M\rightarrow 0\text{,~with K acyclic~} \}.$$
     The quotient algebra $\Hall(\cC_{\bZ/2}(\cA))/I_{\bZ/2}$ is also a $K_0(\cC_2(\cA))$-graded algebra. Let $S_{\bZ/2}$ be the subset of $\Hall(\cC_{\bZ/2}(\cA))/I_{\bZ/2}$ consisting of $t[K]$, where $t\in\bC^\times$ and $[K]$ is an acyclic $\bZ/2$-graded complex. In \cite{LP20}, it has been proven that $S_{\bZ/2}$ is a right Ore set. As a result, we can form the right localization of $\Hall(\cC_{\bZ/2}(\cA))/I_{\bZ/2}$ with respect to $S_{\bZ/2}$.

     \begin{definition}
     
         For any hereditary, abelian, essential small, finitary $\bF_q$-category $\cA$, the algebra $(\Hall(\cC_{\bZ/2}(\cA))/I_{\bZ/2})[S^{-1}_{\bZ/2}]$, which is the right localization of the $\Hall(\cC_{\bZ/2}(\cA))/I_{\bZ/2}$ with respect to the multiplicative set $S_{\bZ/2}$, 
         %
         %{\color{red}
         %$$S_{\bZ/2}:=\{ tK\in\Hall(\cC_{\bZ/2}%(\cA))/I_{\bZ/2}|t\in\bC^\times, %K\in\cC_{\bZ/2,ac}(\cA)\},$$
         %}
         %
         is called the ($\bZ/2$-graded) semi-derived Ringel-Hall algebra of $\cA$, denoted by $\SHall_{\bZ/2}(\cA)$ . 
     \end{definition}

       For any $\alpha \in K_0(A)$, we define $K_\alpha = [K_A] \ast [K_B]^{-1}$ and $K^\ast_\alpha = [K^\ast_A] \ast [K^\ast_B]^{-1}$, where $\alpha = \widehat{A} - \widehat{B}$ for some $A,B \in A$. This is well-defined, i.e., it does not depend on the choice of $A,B$ such that $\alpha = \widehat{A} - \widehat{B}$.

     \begin{lemma}\cite{LP20}\label{lemma: commutative of K and M}
     In the algebra $\SHall_{\bZ/2}(\cA)$, we have
     \[ K_\alpha\ast[C_A\oplus C_B^\star]=\upsilon^{(\wh{A}-\wh{B},\alpha)}[C_A\oplus C_B^\star]\ast K_\alpha,\]
     \[K_\beta^\star\ast[C_A\oplus C_B^\star]=\upsilon^{(\wh{B}-\wh{A},\beta)}[C_A\oplus C_B^\star]\ast K_\beta^\star,\]
     \[[K_\alpha,K_\beta]=[K_\alpha,K_\beta^\star]=[K_\alpha^\star,K_\beta^\star]=0,\]
     for arbitrary $\alpha,\beta\in K_0(\cA)$ and $A,B\in\cA$.
     
     \end{lemma}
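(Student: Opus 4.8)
The three families of relations all express that the localized classes $K_\alpha$ and $K_\beta^\star$ of acyclic complexes are ``group-like'' in $\SHall_{\bZ/2}(\cA)$: they commute with one another, and skew-commute --- by an explicit power of $\upsilon$ governed by the symmetric Euler form --- with the stalk-type class $[C_A\oplus C_B^\star]$. The plan is: (1) reduce each identity to the case where $K_\alpha$ (resp.\ $K_\alpha^\star$) is a single generator $[K_X]$ (resp.\ $[K_X^\star]$), $X\in\cA$; (2) use the shift automorphism $\star$ of $\SHall_{\bZ/2}(\cA)$ to halve the number of cases; (3) compute the remaining structure constants directly in $\Hall(\cC_{\bZ/2}(\cA))$.

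For (1): since $K_\alpha=[K_P]\ast[K_Q]^{-1}$ for any $P,Q$ with $\alpha=\wh P-\wh Q$, and ``skew-commuting past $x$ by a scalar'' multiplies under products and inverts under inverses, it suffices to prove, for every $X\in\cA$,
\[
[K_X]\ast[C_A\oplus C_B^\star]=\upsilon^{(\wh A-\wh B,\wh X)}[C_A\oplus C_B^\star]\ast[K_X],
\]
the analogue for $[K_X^\star]$ with exponent $(\wh B-\wh A,\wh X)$, together with $[K_P]\ast[K_Q]=[K_Q]\ast[K_P]$ and $[K_P]\ast[K_Q^\star]=[K_Q^\star]\ast[K_P]$; a short manipulation with exponents (using that $K_\alpha$ is already known to be independent of $P,Q$) then upgrades these to the statements for $K_\alpha,K_\beta^\star$. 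For (2): the shift functor $\star$ on $\cC_{\bZ/2}(\cA)$ is an exact autoequivalence which preserves the component-wise Euler form (it merely interchanges the two summands), hence induces an algebra automorphism of $\Hall(\cC_{\bZ/2}(\cA))$ fixing $I_{\bZ/2}$ and $S_{\bZ/2}$; it descends to $\SHall_{\bZ/2}(\cA)$ and exchanges $[K_X]\leftrightarrow[K_X^\star]$, $[C_A]\leftrightarrow[C_A^\star]$. Applying it (and relabelling $A\leftrightarrow B$) turns the $[K_X]$-identity into the $[K_X^\star]$-identity and $[K_P,K_Q]=0$ into $[K_P^\star,K_Q^\star]=0$, so only the $[K_X]$-versus-$[C_A\oplus C_B^\star]$ identity and the two commutators $[K_P,K_Q]$, $[K_P,K_Q^\star]$ remain.

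For (3) I enumerate the relevant $\bZ/2$-graded extensions. The leverage comes from the degenerate shape $K_X=(X\xrightarrow{1}X)$: in any short exact sequence $0\to U^\bullet\to L^\bullet\to K_X\to0$ the differential $d^0_{K_X}=1$ forces a splitting of the degree-$1$ component sequence while $d^1_{K_X}=0$ kills the other differential of $L^\bullet$, so $L^\bullet$ is pinned down up to the $\cA$-extension class of a single component (and symmetrically with $K_X$ on the left). One then observes: whenever $K_X$ sits as a \emph{subobject}, $[L^\bullet]=[K_X\oplus U^\bullet]$ in $\SHall_{\bZ/2}(\cA)$ by the defining relations of $I_{\bZ/2}$; whenever $K_X$ appears as a \emph{quotient}, the same $L^\bullet$ has the same homology and the same class in $K_0(\cC_{\bZ/2}(\cA))$ as $K_X\oplus U^\bullet$, and the identity $[L^\bullet]=[K_X\oplus U^\bullet]$ in $\Hall(\cC_{\bZ/2}(\cA))/I_{\bZ/2}$ still holds by the earlier description of classes in $\Hall(\cC_{\bZ/2}(\cA))/I_{\bZ/2}$ (equivalently, $I_{\bZ/2}$ also contains the ``acyclic-quotient'' relations). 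All remaining $\Hom$- and $\Ext^1$-groups in $\cC_{\bZ/2}(\cA)$ collapse to $\Hom_\cA$ and $\Ext^1_\cA$ of the underlying objects, so $\sum_{L^\bullet}h^{L^\bullet}_{M^\bullet N^\bullet}$ is a clean power of $\upsilon$ ($\cA$ hereditary gives $|\Ext^1_\cA(M,N)|/|\Hom_\cA(M,N)|=\upsilon^{-2\la\wh M,\wh N\ra}$). Matching the component-wise twist $\upsilon^{\la-,-\ra_{\cw}}$ against that power yields
\[
[K_X]\ast[C_A\oplus C_B^\star]=\upsilon^{\la\wh X,\wh A\ra-\la\wh X,\wh B\ra}[K_X\oplus C_A\oplus C_B^\star],\qquad
[C_A\oplus C_B^\star]\ast[K_X]=\upsilon^{\la\wh B,\wh X\ra-\la\wh A,\wh X\ra}[K_X\oplus C_A\oplus C_B^\star],
\]
whose ratio is $\upsilon^{(\wh A-\wh B,\wh X)}$; and $[K_P]\ast[K_Q]=[K_P\oplus K_Q]=[K_Q]\ast[K_P]$, $[K_P]\ast[K_Q^\star]=[K_P\oplus K_Q^\star]=[K_Q^\star]\ast[K_P]$, since there the twist exactly cancels the $\upsilon$-power coming from $\sum h$. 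With steps (1)--(2) this proves all three displayed identities.

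The main obstacle is the asymmetry of $I_{\bZ/2}$, whose generators involve only acyclic \emph{subobjects}: left and right multiplication by $[K_X]$ are therefore not on equal footing, and for the products in which $K_X$ (or $K_X^\star$) occurs as a quotient --- such as $[K_X]\ast[C_B^\star]$ and $[K_X]\ast[C_A\oplus C_B^\star]$ --- the middle term of a non-split extension need not be isomorphic to $K_X\oplus U^\bullet$. There one must fall back on the structural fact that the class of a complex in $\Hall(\cC_{\bZ/2}(\cA))/I_{\bZ/2}$ is determined by its homology together with its class in $K_0(\cC_{\bZ/2}(\cA))$. Granting that input, the remainder is a routine, if somewhat lengthy, tally of $\upsilon$-exponents.
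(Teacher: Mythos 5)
The paper itself gives no proof of this lemma --- it is quoted from \cite{LP20} --- so there is nothing internal to compare against; judged against the source, your outline follows essentially the same route (reduce to the generators $[K_X]$, $[K^\star_X]$, exploit the shift automorphism, compute $\Hom$ and $\Ext^1$ against $K_X$, and let the ideal collapse every middle term onto $[K_X\oplus U^\bullet]$), and your exponent bookkeeping is correct: with $(C_A\oplus C^\star_B)^0=B$ and $(C_A\oplus C^\star_B)^1=A$, the twist $\upsilon^{\la \wh X,\wh A\ra+\la \wh X,\wh B\ra}$ against the count $|\Ext^1_\cA(X,B)|/|\Hom_\cA(X,B)|=\upsilon^{-2\la \wh X,\wh B\ra}$ gives $\upsilon^{\la \wh X,\wh A\ra-\la \wh X,\wh B\ra}$, the opposite order gives $\upsilon^{\la \wh B,\wh X\ra-\la \wh A,\wh X\ra}$, and the ratio is $\upsilon^{(\wh A-\wh B,\wh X)}$ as required.

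Two points need repair. The lesser one: in $0\to U^\bullet\to L^\bullet\to K_X\to 0$ the degree-$1$ component sequence does \emph{not} split in general; its class in $\Ext^1_\cA(X,U^1)$ is only killed after pullback along $\pi^0\colon L^0\twoheadrightarrow X$, so it may be any element of the image of the connecting map $\Hom_\cA(U^0,U^1)\to\Ext^1_\cA(X,U^1)$. What one actually uses is the natural isomorphism $\Hom_{\cC_{\bZ/2}(\cA)}(K_X,-)\cong\Hom_\cA(X,(-)^0)$ and its derived version $\Ext^1_{\cC_{\bZ/2}(\cA)}(K_X,-)\cong\Ext^1_\cA(X,(-)^0)$ (valid since $(-)^0$ is exact and $\cA$ is hereditary); this yields the same count $\upsilon^{-2\la \wh X,\wh{U^0}\ra}$, so the arithmetic survives, but the stated mechanism is wrong. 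The serious one is the collapse $[L^\bullet]=[K_X\oplus U^\bullet]$ when $K_X$ occurs as a \emph{quotient}, which you correctly flag: a non-split such extension generally has $L^\bullet\not\cong K_X\oplus U^\bullet$, so this is not covered by the generators of $I_{\bZ/2}$ as printed in this paper. Of your two proposed fixes, the first (classes in the quotient are determined by homology and $K_0$-class) is essentially Theorem \ref{thm: basis of MHA}, which in \cite{LP20} is proved \emph{using} the present lemma, so that route is circular given the ordering of results; the second is the right one, but it is an input, not something you may take as "equivalent": in \cite{LP20} the ideal is generated by the relations $[L]-[K\oplus M]$ for short exact sequences with acyclic $K$ as sub \emph{or} as quotient, and the definition recorded in Section 2 here has silently dropped half of the generators. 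Your proof is complete only once that full definition (or an independent proof of the acyclic-quotient identification) is supplied; as written, that step is a genuine gap.
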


     \begin{theorem}\label{thm: basis of MHA}
         The semi-derived Ringel-Hall algebra $\SHall_{\bZ/2}(\cA)$ of an abelian hereditary category $\cA$ has a basis given by 
         \[[K_\alpha]\ast[K^\star_\beta]\ast[C^\star_{A}\oplus C_B].\]
         where $\alpha,\beta\in K_0(\cA)$ and $A,B\in\cA$. Moreover, for any $[M^\bullet]=(M^i,d^i,i=0,1)\in\cC_{\bZ/2}(\cA)$, we have 

             $$[M^\bullet]=\upsilon^{\la\widehat{\im d^0},\widehat{\HG^0(M^\bullet)}-\widehat{\HG^0(M^\bullet)}\ra+\la\widehat{\im d^1},\widehat{\HG^1(M^\bullet)}-\widehat{\HG^0(M^\bullet)}\ra}K_{\widehat{\im d^0}}\ast K^\star_{\widehat{\im d^1}}\ast[C^\star_{\HG^0(M^\bullet)}\oplus C_{\HG^1(M^\bullet)}],$$
     \end{theorem}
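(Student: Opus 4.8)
The plan is to split the statement into two parts---the displayed formula, which in particular shows that $\mathcal{B}:=\{[K_\alpha]\ast[K^\star_\beta]\ast[C^\star_{A}\oplus C_{B}]\mid\alpha,\beta\in K_0(\cA),\ A,B\in\cA\}$ spans $\SHall_{\bZ/2}(\cA)$, and the linear independence of $\mathcal{B}$---and to prove the formula first, then independence.

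\emph{The formula.} I would induct on the complexity $r(M^\bullet):=\dim_{\bF_q}\im d^0_M+\dim_{\bF_q}\im d^1_M$, taking as base cases the complexes with zero differentials (where $\HG^i(M^\bullet)=M^i$, $\im d^i_M=0$, $K_0=K^\star_0=[0]$, so the formula is a tautology) and the complexes $K_X$, $K^\star_X$ and their direct sums (where $[K_X]=K_{\widehat X}$, $[K^\star_X]=K^\star_{\widehat X}$ already lie in $\mathcal{B}$). For the inductive step, consider the canonical filtration in $\cC_{\bZ/2}(\cA)$
\[0\ \subseteq\ C_{\im d^0_M}\ \subseteq\ C^\star_{\im d^1_M}\oplus C_{\im d^0_M}\ \subseteq\ M^\bullet,\]
whose subquotients, from the bottom, are $C_{\im d^0_M}$, $C^\star_{\im d^1_M}$ and $C^\star_{M^0/\im d^1_M}\oplus C_{M^1/\im d^0_M}$, all of complexity $0$; hence in $\Hall(\cC_{\bZ/2}(\cA))$ the class $[M^\bullet]$ occurs as a summand of the product
\[[C^\star_{M^0/\im d^1_M}\oplus C_{M^1/\im d^0_M}]\ast[C^\star_{\im d^1_M}\oplus C_{\im d^0_M}],\]
a product of two elements of $\mathcal{B}$. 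Expand this product by the multiplication rule. Every summand of complexity strictly less than $r(M^\bullet)$ is, by the induction hypothesis, $\upsilon^{?}$ times a single element of $\mathcal{B}$. Among the summands of complexity $r(M^\bullet)$, any one $L^\bullet$ containing a nonzero proper acyclic subcomplex $K^\bullet$ satisfies $[L^\bullet]=[K^\bullet\oplus L^\bullet/K^\bullet]=\upsilon^{?}[K^\bullet]\ast[L^\bullet/K^\bullet]$ in $\SHall_{\bZ/2}(\cA)$ by the defining relations of $I_{\bZ/2}$ and the multiplication rule; here $L^\bullet/K^\bullet$ has strictly smaller complexity, $[K^\bullet]$ is a base case or again decomposes, and Lemma \ref{lemma: commutative of K and M} is used to bring $K_\alpha$, $K^\star_\beta$ into the normal order of $\mathcal{B}$. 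The only summand not handled this way is $M^\bullet$ itself, so the displayed identity follows by solving the resulting relation for $[M^\bullet]$; collecting the $\la-,-\ra_{\cw}$-twists produced by each multiplication and each $I_{\bZ/2}$-identification, and checking that they sum to the exponent in the statement, is a direct computation.

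\emph{Linear independence.} Since $\Hall(\cC_{\bZ/2}(\cA))$ is $K_0(\cC_{\bZ/2}(\cA))$-graded, $I_{\bZ/2}$ is a graded ideal and $S_{\bZ/2}$ consists of homogeneous elements, so it suffices to argue inside a single graded component. There one checks that the generators $[L]-[K\oplus M]$ of $I_{\bZ/2}$ are triangular with respect to the filtration by the $\bF_q$-dimension of the largest acyclic subcomplex; consequently, in $\Hall(\cC_{\bZ/2}(\cA))/I_{\bZ/2}$ the classes of the complexes $K^\bullet_0\oplus(C^\star_A\oplus C_B)$, with $K^\bullet_0$ a direct sum of copies of $K_X$'s and $K^\star_X$'s, are linearly independent; the right localization at the Ore set $S_{\bZ/2}$ of homogeneous non-zero-divisors introduces no new relations and turns these classes into $\mathcal{B}$, which is therefore linearly independent. (One might instead transport independence along the isomorphism $\SHall_{\bZ/2}(\cA)\cong\DDHall(\cA)$, but that isomorphism is proved using the present theorem, so this route is unavailable.)

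\emph{The main obstacle.} The delicate point is, in the inductive step, the treatment of the \emph{rigid} complexes $M^\bullet$---those admitting no nonzero proper acyclic subcomplex and no nonzero proper acyclic quotient complex. These genuinely occur: for instance $\bF_q[x]/(x^2)\xrightarrow{\cdot x}\bF_q[x]/(x^2)\xrightarrow{\cdot x}\bF_q[x]/(x^2)$ over $\cA=\modu\,\bF_q[x]$, where each differential has essential kernel, so that no acyclic direct summand can be split off even after adjoining acyclic complexes and none of the $I_{\bZ/2}$-relations can be applied to $[M^\bullet]$ at all. For such $M^\bullet$ one must genuinely realize $[M^\bullet]$ as the leading term of a product of strictly simpler classes inside the localized algebra; verifying that the induction stays well-founded (every non-leading summand being of smaller complexity, or reducible because it has an acyclic subcomplex) and that the accumulated scalars collapse, once all relations are used, to the single normal-form monomial with exactly the stated exponent, is where the real work lies---this is in essence the content of the corresponding result of \cite{LP20}.
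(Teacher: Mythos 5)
First, a point of reference: the paper does not prove Theorem \ref{thm: basis of MHA} at all --- it is quoted from \cite{LP20} --- so your proposal has to stand on its own, and as it stands it has a genuine gap at its central step. Expanding $[C^\star_{M^0/\im d^1}\oplus C_{M^1/\im d^0}]\ast[C^\star_{\im d^1}\oplus C_{\im d^0}]$ and ``solving for $[M^\bullet]$'' can at best show that $[M^\bullet]$ lies in the \emph{span} of $\mathcal{B}$. The other summands $L^\bullet$ of that product reduce, by your induction or by splitting off acyclic subcomplexes, to basis elements $K_{\alpha'}\ast K^\star_{\beta'}\ast[C^\star_{A'}\oplus C_{B'}]$ with $\alpha'=\widehat{\im d^0_L}$, $\beta'=\widehat{\im d^1_L}$ and cohomologies generally different from those of $M^\bullet$; since the product itself genuinely has many basis components (compare Theorem \ref{thm: product formula}), the displayed \emph{single-term} identity --- which is the actual content of the ``moreover'' clause and what makes $K_\alpha$ well defined --- would require an enormous cancellation that you dismiss as ``a direct computation'' but never address. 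The proof in \cite{LP20} runs in the opposite direction: one first shows, using hereditariness (which your argument never invokes), that extensions with acyclic \emph{quotient}, not only acyclic sub as in the definition of $I_{\bZ/2}$, also split in $\Hall(\cC_{\bZ/2}(\cA))/I_{\bZ/2}$; this collapses both the product $[K_{\im d^0}]\ast[K^\star_{\im d^1}]\ast[C^\star_{\HG^0(M^\bullet)}\oplus C_{\HG^1(M^\bullet)}]$ and the chain of extensions linking $[M^\bullet]$ to $[K_{\im d^0}\oplus K^\star_{\im d^1}\oplus C^\star_{\HG^0(M^\bullet)}\oplus C_{\HG^1(M^\bullet)}]$ to single terms with explicit coefficients. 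Your own ``rigid'' acyclic example, to which no $I_{\bZ/2}$-relation applies, is precisely the case your induction cannot close, and you concede as much by deferring ``the real work'' to \cite{LP20}; but that is the theorem, not a technicality.

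Two further problems. The induction measure $\dim_{\bF_q}\im d^0+\dim_{\bF_q}\im d^1$ is not defined for a general finitary hereditary $\cA$ (e.g.\ coherent sheaves on $\bP^1_{\bF_q}$, where objects are infinite-dimensional over $\bF_q$), so the induction is not well founded as stated. And for linear independence, both the asserted ``triangularity'' of the generators of $I_{\bZ/2}$ and the claim that localizing at $S_{\bZ/2}$ ``introduces no new relations'' (i.e.\ that classes of acyclic complexes are non-zero-divisors in the quotient, which underlies the Ore localization being injective) are themselves substantial results of \cite{LP20} rather than routine checks; as written this half of the argument is an assertion, not a proof.
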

     The following theorem gives the structure constants of the product of this basis.

     \begin{theorem}\cite{Wu18}\label{thm: product formula}
         Let $X_1,Y_1,X_2,Y_2$ be all in the category $\cA$. %denote $a_M=|\End_{\cA}(M)|$, 
         Then we have 
          \begin{eqnarray*}
          %row1
          &[C^\star_{X_1}\oplus C_{Y_1}]\ast[C^\star_{X_2}\oplus C_{Y_2}]&=\sum\limits_{\substack{\delta^0,\delta^1\in K_0(\cA)\\K,L\in \Iso(\cA)}}\sum\limits_{\substack{S^0,S^1,M^0,M^1\\N^0,N^1\in\Iso(\cA)\hat{S}^0=\delta^0,\hat{S}^1=\delta^1}}\upsilon^{2N} \frac{a_{X_1}a_{X_2}a_{Y_1}a_{Y_2}}{a_{S^0}a_{S^1}a_{M^0}a_{M^1}a_{N^0}a_{N^1}} \\
          %row2
          &&\cdot h^{X_2}_{N^1S^1}h^K_{M^0N^1}h^{X_1}_{S^0M^0}h^{Y_2}_{N^0S^0}h^{L}_{M^1N^0}h^{Y_1}_{S^1M^1} K_{\delta^0}\ast K^\star_{\delta^1}\ast[C^\star_{K}\oplus C_{L}],\\
          \end{eqnarray*}
         where $N=\la \delta^0,\widehat{K} \ra+\la \delta^1,\widehat{L} \ra+\la M^0,N^1 \ra+\la M^1,N^0 \ra-\la \delta^0,\widehat{L} \ra-\la \delta^1,\widehat{K} \ra-\la Y_1,\delta^0 \ra-\la X_1,\delta^1\ra-\la \delta^0,\widehat{N^1} \ra-\la \delta^1,\widehat{N^0} \ra.$
     \end{theorem}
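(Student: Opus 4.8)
\emph{Proof proposal.}

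The plan is to compute the product in the Hall algebra $\Hall(\cC_{\bZ/2}(\cA))$ and then transport the answer into $\SHall_{\bZ/2}(\cA)$ by means of Theorem~\ref{thm: basis of MHA}. Put $A^\bullet:=C^\star_{X_1}\oplus C_{Y_1}$ and $B^\bullet:=C^\star_{X_2}\oplus C_{Y_2}$; these complexes have vanishing differentials, with $A^0=X_1$, $A^1=Y_1$, $B^0=X_2$, $B^1=Y_2$, so $\la\wh{A^\bullet},\wh{B^\bullet}\ra_{\cw}=\la X_1,X_2\ra+\la Y_1,Y_2\ra$, and in $\Hall(\cC_{\bZ/2}(\cA))$ (hence also in $\SHall_{\bZ/2}(\cA)$, these basis elements being images of honest complexes) one has $[A^\bullet]\ast[B^\bullet]=\upsilon^{\la X_1,X_2\ra+\la Y_1,Y_2\ra}\sum_{[R^\bullet]}h^{R^\bullet}_{A^\bullet B^\bullet}[R^\bullet]$.

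First I would determine the possible middle terms. For a short exact sequence $0\to B^\bullet\to R^\bullet\to A^\bullet\to 0$, the chain-map conditions, combined with the vanishing of the differentials of $A^\bullet,B^\bullet$ and the disjointness of their supports in each degree, force $R^0$ to be an extension of $X_1$ by $X_2$, $R^1$ an extension of $Y_1$ by $Y_2$, and $d^0_R$ (resp. $d^1_R$) to factor as $R^0\twoheadrightarrow X_1\xrightarrow{\,f\,}Y_2\hookrightarrow R^1$ (resp. $R^1\twoheadrightarrow Y_1\xrightarrow{\,g\,}X_2\hookrightarrow R^0$); the relations $d^0_Rd^1_R=d^1_Rd^0_R=0$ are then automatic, and conversely any such datum yields a complex. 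Writing $S^0:=\im f$, $M^0:=\ker f$, $N^0:=\coker f$ and, symmetrically, $S^1:=\im g$, $M^1:=\ker g$, $N^1:=\coker g$, one obtains short exact sequences $0\to M^0\to X_1\to S^0\to 0$, $0\to S^0\to Y_2\to N^0\to 0$, $0\to M^1\to Y_1\to S^1\to 0$, $0\to S^1\to X_2\to N^1\to 0$, and, computing homology, $0\to N^1\to\HG^0(R^\bullet)\to M^0\to 0$, $0\to N^0\to\HG^1(R^\bullet)\to M^1\to 0$, while $\im d^0_R=S^0$ and $\im d^1_R=S^1$.

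Next I would invoke Theorem~\ref{thm: basis of MHA} to rewrite each $[R^\bullet]$ as a power of $\upsilon$ times $K_{\wh{S^0}}\ast K^\star_{\wh{S^1}}\ast[C^\star_{\HG^0(R^\bullet)}\oplus C_{\HG^1(R^\bullet)}]$; with $\delta^0=\wh{S^0}$, $\delta^1=\wh{S^1}$, $K=\HG^0(R^\bullet)$, $L=\HG^1(R^\bullet)$ this matches the indexing in the asserted formula. It then remains to evaluate, for each choice of isomorphism classes $S^0,S^1,M^0,M^1,N^0,N^1,K,L$ subject to the six exact sequences above, the total coefficient of $K_{\delta^0}\ast K^\star_{\delta^1}\ast[C^\star_K\oplus C_L]$. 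Using the standard identity $h^R_{MN}=\frac{a_Ma_N}{a_R}g^R_{MN}$ relating Hall numbers to numbers $g^R_{MN}$ of subobjects, I would split the count into: the choices of the extensions $R^0$ and $R^1$; the choices of the two differentials $f,g$ with prescribed kernel and image, which produce the Hall numbers $h^{X_1}_{S^0M^0}h^{Y_2}_{N^0S^0}$ and $h^{Y_1}_{S^1M^1}h^{X_2}_{N^1S^1}$ together with powers of $q=\upsilon^2$ coming from $\Hom$-counts with fixed kernel and image; and the extension structures $h^{K}_{M^0N^1}$, $h^{L}_{M^1N^0}$ on the two homologies. Dividing by $a_{R^\bullet}$, multiplying by the prefactor $a_{X_1}a_{Y_1}a_{X_2}a_{Y_2}$, and combining the three sources of $\upsilon$-powers — the twist $\upsilon^{\la X_1,X_2\ra+\la Y_1,Y_2\ra}$, the exponent supplied by Theorem~\ref{thm: basis of MHA}, and the $\Hom$-count powers of $q$ — should produce exactly $\upsilon^{2N}\frac{a_{X_1}a_{X_2}a_{Y_1}a_{Y_2}}{a_{S^0}a_{S^1}a_{M^0}a_{M^1}a_{N^0}a_{N^1}}$ times the six Hall numbers.

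The real work lies in this last step, namely the bookkeeping of automorphism orders and of the exponent of $\upsilon$. The automorphism group $\Aut(R^\bullet)$ must be resolved through $\Aut(R^0)$, $\Aut(R^1)$ and then through the automorphism groups of $X_i,Y_i$ and of $S^i,M^i,N^i$, and the two differentials contribute $q$-powers built from sizes of $\Hom$-spaces with fixed kernel and image; after substituting $\wh{X_1}=\wh{M^0}+\wh{S^0}$, $\wh{Y_2}=\wh{S^0}+\wh{N^0}$, $\wh{Y_1}=\wh{M^1}+\wh{S^1}$, $\wh{X_2}=\wh{S^1}+\wh{N^1}$, $\wh{K}=\wh{M^0}+\wh{N^1}$, $\wh{L}=\wh{M^1}+\wh{N^0}$ one must verify that the accumulated exponent collapses to the single expression $2N$ of the statement — a long but mechanical identity among Euler forms. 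A convenient organization is to handle the degree-$0$ part, the degree-$1$ part, and the two differentials independently, and to use the associativity relation \eqref{eq:AssOfHA} to repackage any triple products of Hall numbers that appear into the six two-term Hall numbers displayed in the theorem.
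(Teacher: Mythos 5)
The paper does not prove this theorem: it is imported verbatim from the thesis \cite{Wu18}, so there is no in-paper argument to compare yours against. Judged on its own, your structural analysis is correct and is certainly the intended route. Since both $C^\star_{X_1}\oplus C_{Y_1}$ and $C^\star_{X_2}\oplus C_{Y_2}$ have zero differentials, the middle term $R^\bullet$ of any extension does have $R^0$ an extension of $X_1$ by $X_2$, $R^1$ an extension of $Y_1$ by $Y_2$, and differentials factoring as $R^0\twoheadrightarrow X_1\xrightarrow{f}Y_2\hookrightarrow R^1$ and $R^1\twoheadrightarrow Y_1\xrightarrow{g}X_2\hookrightarrow R^0$ with the relations $d^0d^1=d^1d^0=0$ automatic; your identification of $\im d^0=S^0$, $\im d^1=S^1$ and of the six short exact sequences matches the six Hall numbers $h^{X_2}_{N^1S^1},h^K_{M^0N^1},h^{X_1}_{S^0M^0},h^{Y_2}_{N^0S^0},h^{L}_{M^1N^0},h^{Y_1}_{S^1M^1}$ exactly, and Theorem \ref{thm: basis of MHA} is the right device for landing in the stated basis.

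The difficulty is that the proposal stops precisely where the theorem begins. The entire content of the statement is the coefficient $\upsilon^{2N}\,a_{X_1}a_{X_2}a_{Y_1}a_{Y_2}/(a_{S^0}a_{S^1}a_{M^0}a_{M^1}a_{N^0}a_{N^1})$, and every step that produces it is deferred with ``should produce'' or ``one must verify'': (i) the evaluation of $h^{R^\bullet}_{A^\bullet B^\bullet}$, including the factor $|\Hom_{\cC_{\bZ/2}(\cA)}(A^\bullet,B^\bullet)|=q^{\dim\Hom(X_1,X_2)+\dim\Hom(Y_1,Y_2)}$ in the denominator; (ii) the claim that, for fixed $S^i,M^i,N^i$, the number of complexes $R^\bullet$ whose homology $\HG^0(R^\bullet)$ lies in a prescribed class $K$ is governed by $h^K_{M^0N^1}$ --- here the extension $0\to N^1\to\HG^0(R^\bullet)\to M^0\to 0$ is obtained from the extension class of $R^0$ by pullback and pushout, and one must count the fibers of that assignment, which is an argument, not bookkeeping; (iii) the resolution of $a_{R^\bullet}$, which is not $a_{R^0}a_{R^1}$ since automorphisms must commute with the (generally nonzero) differentials; and (iv) the verification that all accumulated powers of $\upsilon$ --- the twist $\la X_1,X_2\ra+\la Y_1,Y_2\ra$, the exponent from Theorem \ref{thm: basis of MHA}, and the $q$-powers from the Hom-counts --- collapse to $2N$. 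None of these is carried out, and (ii)--(iii) in particular can absorb or emit powers of $q$ and automorphism orders in ways that would change the formula if mishandled. As written, this is a correct and well-organized plan for a proof, but the proof itself is missing.
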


    %\subsection{Bridgeland's Hall algebras}  Let $\cP\subset\cA$, $\cC_{\bZ/2}(\cP)$ be the subcategory of $\bZ/2$-graded complexes over $\cP$, has enough projective, Bridgeland's Hall algebra $\DHall(\cA)$  is then the localization of twisted Hall algebra with respect to the set of acyclic complexes \[\DHall(\cA)=\Hall_{\tw}(\cC_{\bZ/2}(\cP))[[M^\bullet]^{-1}:H^\ast(M^\bullet)=0].\]
     
     By Theorem \ref{thm: basis of MHA} \cite{LP20}, $\SHall_{\bZ/2}(\cA)$ also has a basis given by 
     \[\{[C_A]\ast [C_B^\star]\ast K_\alpha\ast K_\beta^\star\|[A],[B]\in\Iso{\cA} \text{ and } \alpha,\beta\in K_0(\cA)\}.\]
     And by \cite{LP20,Bri13,JX97,Gor13}, there is an embedding of algebras:
     \[I^+:\Hall^{\ex}_{\cw}(\cA)^+\hookrightarrow\SHall_{\bZ/2}(\cA)\]
     \[[A]\mapsto[C_A],\qquad k_\alpha\mapsto K_\alpha,\]
     where $A\in\cA, \alpha\in K_0(\cA)$. By composing $I^+$ and the shift functor $\star$, we also have an embedding
     \[I^-:\Hall^{\ex}_{\cw}(\cA)^-\hookrightarrow\SHall_{\bZ/2}(\cA)\]     \[[A]\mapsto[C_A^\star],\qquad k_\alpha\mapsto K^\star_\alpha,\]     
     which, together with $I^+$, provides an isomorphism of vector spaces $\SHall_{\bZ/2}(\cA)\simeq\DDHall(\cA)$. Furthermore, we have the following theorem.
     
     \begin{theorem}\label{thm: BHA and MHA}\cite{LP20}
     Assume that $\cA$ is a hereditary, abelian, essentially small, finitary $\bF_q$-linear category. There is an isomorphism of algebras:
     \[I:\DDHall(\cA)\rightarrow\SHall_{\bZ/2}(\cA),\]
     defined on the generators by $\sum [A]k_\alpha\otimes [B]k_\beta\mapsto \sum I^+([A]k_\alpha)\ast I^-([B]k_\beta)=\sum [C_A]\ast K_\alpha\ast [C_B^\star]\ast K_\beta^\star$, where $A,B\in\cA$ and $\alpha,\beta\in K_0(\cA)$.

     \end{theorem}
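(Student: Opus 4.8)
The plan is to exploit the structural information already available rather than to redo a computation from scratch. By the discussion preceding the statement, $I$ is a well-defined $\bC$-linear isomorphism $\DDHall(\cA)\to\SHall_{\bZ/2}(\cA)$ whose restrictions to the two Borel halves are the algebra embeddings $I^{+}:\Hall^{\ex}_{\tw}(\cA)^{+}\hookrightarrow\SHall_{\bZ/2}(\cA)$ and $I^{-}:\Hall^{\ex}_{\tw}(\cA)^{-}\hookrightarrow\SHall_{\bZ/2}(\cA)$, and by Theorem \ref{thm: basis of MHA} together with the bases listed afterwards the subalgebras $I^{+}(\Hall^{\ex}_{\tw}(\cA)^{+})$ and $I^{-}(\Hall^{\ex}_{\tw}(\cA)^{-})$ generate $\SHall_{\bZ/2}(\cA)$. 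Since, by Theorem \ref{thm: DDH(A)}, the product of $\DDHall(\cA)$ is completely determined on $\Hall^{\ex}_{\tw}(\cA)^{+}\widehat{\otimes}\Hall^{\ex}_{\tw}(\cA)^{-}$ by the rules (D1)--(D4), and $I$ is already a bijection compatible with the two halves, it will suffice to show that the images $I^{\pm}(\,\cdot\,)$ satisfy (D1)--(D4) inside $\SHall_{\bZ/2}(\cA)$. Relations (D1) and (D2) hold because $I^{+}$ and $I^{-}$ are algebra homomorphisms, and (D3) is just the normal form of a general element; so the whole proof comes down to verifying the cross-commutation relation (D4) for the images.

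First I would reduce (D4) to generators. The set of pairs $(x,y)\in\Hall^{\ex}_{\tw}(\cA)^{+}\times\Hall^{\ex}_{\tw}(\cA)^{-}$ for which (D4) holds in $\SHall_{\bZ/2}(\cA)$ is closed under products in each argument --- the usual matched-pair bookkeeping, using that $\Delta^{\pm}$ are algebra maps, that $\varphi$ is a Hopf pairing, and (D1)--(D3) --- so it is enough to treat $x,y$ ranging over the generators $[A]$, $k_{\alpha}$ and $[B]$, $k_{\beta}$ of the two halves. When at least one of the two is a torus element, the identity is immediate: on the $\SHall_{\bZ/2}(\cA)$ side it follows from Lemma \ref{lemma: commutative of K and M} (for instance $K^{\star}_{\beta}\ast[C_{A}]=\upsilon^{-(\wh A,\beta)}[C_{A}]\ast K^{\star}_{\beta}$ and $K^{\star}_{\beta}\ast K_{\alpha}=K_{\alpha}\ast K^{\star}_{\beta}$), while on the $\DDHall(\cA)$ side the same scalars drop out of (D4) because $\Delta^{\pm}(k_{\gamma})=k_{\gamma}\otimes k_{\gamma}$, the only surviving Hall numbers are $h^{A}_{M0}$ and $h^{A}_{0N}$, and $\varphi([M]k_{\alpha},[N]k_{\beta})=\upsilon^{(\alpha,\beta)}\delta_{[M],[N]}a_{M}$.

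The essential case is $x=[A]$, $y=[B]$ with $A,B\in\cA$, i.e. the commutation of $[C_{A}]=I^{+}([A])$ with $[C^{\star}_{B}]=I^{-}([B])$. Here I would compute $[C^{\star}_{B}]\ast[C_{A}]$ directly in $\SHall_{\bZ/2}(\cA)$ by specializing Wu's product formula (Theorem \ref{thm: product formula}) to $(X_{1},Y_{1})=(B,0)$ and $(X_{2},Y_{2})=(0,A)$, reducing the Hall numbers of $\cC_{\bZ/2}(\cA)$ that occur there to Hall numbers of $\cA$ and collapsing the exponent $N$, so as to write $[C^{\star}_{B}]\ast[C_{A}]$ in the basis $K_{\delta^{0}}\ast K^{\star}_{\delta^{1}}\ast[C^{\star}_{K}\oplus C_{L}]$. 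On the other side, I would expand the element $(1\otimes[B])([A]\otimes 1)$ of $\DDHall(\cA)$ by applying (D4) with the explicit coproducts $\Delta^{+}([A])$, $\Delta^{-}([B])$ recalled above and the pairing $\varphi$, obtaining an expression in the standard form $([M]k_{\bullet}\otimes 1)(1\otimes[N])$. The hard part will be to show that these two expansions agree term by term: this is a genuine identity among Hall numbers of $\cA$, and I expect it to follow from Green's formula (Lemma \ref{thm: green formula}) together with the associativity relation~\eqref{eq:AssOfHA}, which is precisely the mechanism making the coproduct of Theorem \ref{thm: hall bialgebra} multiplicative. Once this identity is established, (D4) holds for all generators, hence for all of $\Hall^{\ex}_{\tw}(\cA)^{\pm}$; therefore $I$ carries the product of $\DDHall(\cA)$ to that of $\SHall_{\bZ/2}(\cA)$, and being already a $\bC$-linear isomorphism it is an isomorphism of algebras.
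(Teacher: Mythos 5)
Note first that the paper does not prove this statement: it is imported verbatim from \cite{LP20}, so there is no in-paper argument to compare against. Measured against the proof in the literature, your strategy is the right one and essentially the standard one: establish that $I$ is a linear isomorphism via the basis of Theorem~\ref{thm: basis of MHA}, observe that (D1)--(D3) are automatic from the fact that $I^{\pm}$ are algebra embeddings and from the definition of $I$, reduce (D4) to generators by the matched-pair argument, dispose of the torus cases via Lemma~\ref{lemma: commutative of K and M} (your scalar $\upsilon^{-(\wh A,\beta)}$ checks out against both sides), and reduce everything to the single cross-commutation of $[C_A]$ with $[C_B^\star]$.

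The caveat is that this last step is exactly where the theorem lives, and you have only announced it, not carried it out. Comparing the specialization of Theorem~\ref{thm: product formula} for $[C^\star_B]\ast[C_A]$ with the expansion of $(1\otimes[B])([A]\otimes 1)$ coming from (D4), $\Delta^{\pm}$ and $\varphi$ is a nontrivial identity among Hall numbers of $\cA$; it does follow from Green's formula (Lemma~\ref{thm: green formula}) plus the associativity relation~\eqref{eq:AssOfHA}, but the bookkeeping of the $\upsilon$-exponents (the Euler-form terms absorbed into $K_\gamma$, $K^\star_\gamma$ when renormalizing to the basis $K_{\delta^0}\ast K^\star_{\delta^1}\ast[C^\star_K\oplus C_L]$) is the delicate part and is not a formality. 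Two smaller points: the reduction of (D4) to generators uses that $\varphi$ is a Hopf pairing with respect to multiplication in \emph{both} arguments, whereas the paper records only one of the two adjointness identities, so you should state and use the second one explicitly; and you should say a word on why $[C_A]\ast K_\alpha\ast[C_B^\star]\ast K^\star_\beta$ really is, up to an invertible scalar, the basis element $K_{\alpha'}\ast K^\star_{\beta'}\ast[C^\star_B\oplus C_A]$ of Theorem~\ref{thm: basis of MHA}, since that is what makes ``linear isomorphism plus multiplicativity'' close the argument. With those steps filled in, the proof is the one in \cite{LP20}.
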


\section{Bialgebra structure on semi-derived Ringel-Hall algebras}\label{sec: bistructure}

     In this section, we define the coproduct of semi-derived Ringel-Hall algebra by using the basis indicated in Theorem \ref{thm: basis of MHA}. We also claim that the defined coproduct is a morphism of algebras, implying that $\SHall_{\bZ/2}(\cA)$ has a bialgebra structure, and at the end of this section, we show that the bialgebra structure of $\SHall_{\bZ/2}(\cA)$ is identified with the bialgebra structure of Drinfeld double Hall algebra. The next section is devoted to proving Lemma \ref{lemma:coassociative law}, Lemma \ref{lemma:coproduct is a morphism of algebra}.

     According to Theorem \ref{thm: basis of MHA}, it suffices to define the coproduct formula for $[C^\star_{A}\oplus C_B]$, $K_\alpha$ and $K_\alpha^\star$ separately if we intend to define an explicit coproduct formula on $\SHall_{\bZ/2}(\cA)$ for an arbitrary hereditary category $\cA$.

     \begin{definition}\label{def:coproduct of MHA}
         Let $\cA$ be a hereditary, abelian, essential small, finitary $\bF_q$-category. We define a linear map $\Delta:\SHall_{\bZ/2}(\cA)\rightarrow\SHall_{\bZ/2}(\cA)\wh{\otimes}\SHall_{\bZ/2}(\cA)$ as follows:
         
         \begin{itemize}
         \item[(1)] For $\alpha\in K_0(\cA)$, $\Delta(K_\alpha)=K_\alpha\otimes K_\alpha$ and $\Delta(K^\star_\alpha)=K^\star_\alpha\otimes K^\star_\alpha$;
         \item[(2)]  For $X,Y\in\cA$, 
             \begin{eqnarray*}
             &&\Delta([C^\star_X\oplus C_Y])\\
             &&=\sum\limits_{\substack{X_1,T,X_2\\Y_1,Y_2}}a_T^{-1}\upsilon^{N}h^X_{X_2TX_1}h^Y_{Y_1TY_2}\frac{a_X}{a_{X_2}a_{X_1}}\cdot\frac{a_Y}{a_{Y_2}a_{Y_1}}[C^\star_{X_1}\oplus C_{Y_1}]K_{Y_2+T}\otimes[C^\star_{X_2}\oplus C_{Y_2}]K^\star_{X_1+T},
             \end{eqnarray*}
         where $N=\la X_2,X_1+T \ra+\la Y_1,Y_2+T \ra+\la T, Y_2 \ra-( X_1+T,Y_2 )-\la X_1,T \ra$. %, and $a_T=|\End_\cA(T,T)|$.  
We also define a linear map $\epsilon: \SHall_{\bZ/2}(\cA)\rightarrow \bC$  by $\epsilon([M])=\delta_{[M],[0]}$ for $ M\in\cA$.
         \end{itemize}

     \end{definition}

     The following lemmas prove that $(\SHall_{\bZ/2}(\cA),\Delta,\epsilon)$ is a bialgebra.
     
     \begin{lemma}[Coassociative Law]\label{lemma:coassociative law}
         For any hereditary, abelian, essentially small, finitary $\bF_q$-category $\cA$, there is a commutative diagram:
         {\[
         \begin{tikzpicture}[>=stealth,scale=1.2]
         %points
         \node (A) at (0,1.5) {$\SHall_{\bZ/2}(\cA)$};
         \node (B) at (6,1.5) {$\SHall_{\bZ/2}(\cA)\otimes\SHall_{\bZ/2}(\cA)$};
         \node (C) at (0,0) {$\SHall_{\bZ/2}(\cA)\otimes\SHall_{\bZ/2}(\cA)$};
         \node (D) at (6,0) {$\SHall_{\bZ/2}(\cA)\otimes\SHall_{\bZ/2}(\cA)\otimes\SHall_{\bZ/2}(\cA)$};
         %lines
         \draw [->] (B) -- node[right] {$\Delta\otimes$1} (D); 
         \draw [->] (C) -- node[below] {1$\otimes\Delta$} (D);
         \draw [->] (A) -- node[above] {$\Delta$} (B);
         \draw [->] (A) -- node[left] {$\Delta$} (C); 

         \end{tikzpicture}
         \]}
     \end{lemma}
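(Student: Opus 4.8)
The plan is to verify the commutativity of the diagram, i.e.\ the identity $(\Delta\otimes\id)\circ\Delta=(\id\otimes\Delta)\circ\Delta$, directly on the basis of $\SHall_{\bZ/2}(\cA)$ given by Theorem~\ref{thm: basis of MHA}; by linearity of $\Delta$ this suffices. (One cannot short-cut via the identification with $\DDHall(\cA)$, since that identification is established only after $\Delta$ has been shown to be an algebra morphism.) Recall that $\Delta$ is extended from the generators by $\Delta\big([K_\alpha]\ast[K^\star_\beta]\ast[C^\star_X\oplus C_Y]\big)=\Delta(K_\alpha)\ast\Delta(K^\star_\beta)\ast\Delta([C^\star_X\oplus C_Y])$. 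Since $\Delta(K_\alpha)=K_\alpha\otimes K_\alpha$ and $\Delta(K^\star_\beta)=K^\star_\beta\otimes K^\star_\beta$ are group-like, and since $K_\alpha,K^\star_\beta$ obey the commutation rules of Lemma~\ref{lemma: commutative of K and M}, the presence of $[K_\alpha],[K^\star_\beta]$ only shifts, in a fixed way, the Grothendieck-group labels occurring in each tensor factor on both sides; so the whole content of the lemma reduces to the case of $[C^\star_X\oplus C_Y]$. Note also that $(\Delta\otimes\id)$ and $(\id\otimes\Delta)$ are by definition $\Delta$ applied in one tensor slot and the identity in the other, so no multiplicativity of $\Delta$ is used here --- that is the separate content of Lemma~\ref{lemma:coproduct is a morphism of algebra}, which will not be invoked.

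Next I would expand both composites explicitly. Writing $\Delta([C^\star_X\oplus C_Y])=\sum c_\bullet\,u_\bullet\otimes v_\bullet$ as in Definition~\ref{def:coproduct of MHA}(2), with $u_\bullet=[C^\star_{X_1}\oplus C_{Y_1}]\ast K_{Y_2+T}$ and $v_\bullet=[C^\star_{X_2}\oplus C_{Y_2}]\ast K^\star_{X_1+T}$, one has $(\Delta\otimes\id)\Delta([C^\star_X\oplus C_Y])=\sum c_\bullet\,\Delta(u_\bullet)\otimes v_\bullet$ and $(\id\otimes\Delta)\Delta([C^\star_X\oplus C_Y])=\sum c_\bullet\,u_\bullet\otimes\Delta(v_\bullet)$. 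For each side I would use Lemma~\ref{lemma: commutative of K and M} to rewrite $u_\bullet$ (resp.\ $v_\bullet$) as a scalar multiple of a single basis element, apply Definition~\ref{def:coproduct of MHA} once more, re-normalise each resulting tensor factor back to basis form (again by Lemma~\ref{lemma: commutative of K and M}), and collect. After this, each side becomes a multiple sum indexed by the objects $X_1,T,X_2,Y_1,Y_2$ of the first application together with a refinement $(X_{11},T',X_{12}),(Y_{11},T',Y_{12})$ of $(X_1,Y_1)$ on the left --- resp.\ $(X_{21},T'',X_{22}),(Y_{21},T'',Y_{22})$ of $(X_2,Y_2)$ on the right --- and a few classes in $K_0(\cA)$, whose summand is a product of four three-term Hall numbers (namely $h^X_{X_2TX_1}\,h^{X_1}_{X_{12}T'X_{11}}\,h^Y_{Y_1TY_2}\,h^{Y_1}_{Y_{11}T'Y_{12}}$ on the left, similarly on the right), a ratio of orders of automorphism groups, and a power of $\upsilon$, multiplying a basis element of $\SHall_{\bZ/2}(\cA)^{\otimes 3}$.

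It then remains to match the two expansions term by term. The Hall-number parts agree by associativity of the Ringel-Hall product: by \eqref{eq:AssOfHA} and the recursive definition of $h^K_{M_1\cdots M_n}$, the sum over $X_1$ of $h^X_{X_2TX_1}h^{X_1}_{X_{12}T'X_{11}}$ and the sum over $X_2$ of $h^X_{X_2TX_1}h^{X_2}_{X_{22}T''X_{21}}$ both compute the same five-term Hall number $h^X_{X_2,T,X_{12},T',X_{11}}$ after the evident relabelling of summation indices (and likewise on the $Y$-side); this pins down the bijection between the index sets of the two sides, and the ratios of automorphism orders then match at once. The remaining point is that the two accumulated powers of $\upsilon$ coincide under this bijection. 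These come from two applications of the exponent $N$ of Definition~\ref{def:coproduct of MHA}(2) on each side together with the normalisation scalars produced by Lemma~\ref{lemma: commutative of K and M}; in simplifying them one repeatedly uses bilinearity of $\la-,-\ra$ and of the symmetrised form $(-,-)$, together with the fact that on the support of the relevant Hall numbers the classes in $K_0(\cA)$ are additive along filtrations (which is also what makes the $K_0(\cA)$-labels of matched tensor factors agree). I expect this last step --- the careful bookkeeping of $\upsilon$-exponents --- to be the main obstacle: no single identity is deep, but there are many terms, and the delicate part is to organise the index substitution so that the two total exponents manifestly cancel.
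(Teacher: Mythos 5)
Your plan is correct and is essentially the paper's own proof: reduce to the generators $[C^\star_X\oplus C_Y]$ (the $K_\alpha,K^\star_\beta$ being group-like), expand the two composites, merge each pair of three-term Hall numbers into a common five-term Hall number $h^X_{X_2^2SX_2^1TX_1}$ (and its $Y$-analogue) via the recursive definition and \eqref{eq:AssOfHA}, observe that the automorphism factors telescope, and match the $\upsilon$-exponents. The bookkeeping you defer is precisely the identity $n_3=m_1+m_2$ that the paper checks using bilinearity of $\la-,-\ra$ and additivity of classes in $K_0(\cA)$ along the relevant filtrations; it is routine, as you anticipate.
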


     \begin{lemma}[Compatibility with product]\label{lemma:coproduct is a morphism of algebra}
         For any hereditary, abelian, essential small, finitary $\bF_q$-category $\cA$, the coproduct $\Delta$ defined in the Definition \ref{def:coproduct of MHA} is a morphism of algebras.
     \end{lemma}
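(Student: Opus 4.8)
The plan is to verify that $\Delta(u\ast v)=\Delta(u)\ast\Delta(v)$ on the basis of $\SHall_{\bZ/2}(\cA)$ given in Theorem \ref{thm: basis of MHA}, reducing as far as possible to the generators $K_\alpha$, $K_\alpha^\star$, $[C^\star_X\oplus C_Y]$. First I would dispose of the easy cases: since $\Delta$ is defined to be grouplike on $K_\alpha$ and $K_\alpha^\star$, and since by Lemma \ref{lemma: commutative of K and M} the elements $K_\alpha,K_\beta^\star$ commute with each other and satisfy clean commutation relations with $[C_A\oplus C_B^\star]$ (up to powers of $\upsilon$ controlled by the symmetric Euler form), multiplicativity for products involving only $K$'s and $K^\star$'s, and for $K_\alpha\ast[C^\star_X\oplus C_Y]$ type products, follows from a direct bookkeeping check that the $\upsilon$-exponents in Definition \ref{def:coproduct of MHA}(2) transform correctly under left/right multiplication by grouplikes. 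This already shows $\Delta$ is an algebra map once we know it is multiplicative on products of the form $[C^\star_{X_1}\oplus C_{Y_1}]\ast[C^\star_{X_2}\oplus C_{Y_2}]$, because every basis element is, up to an explicit scalar and grouplike factors, of the form $[C^\star_A\oplus C_B]$.

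The core of the proof is therefore the identity
\[
\Delta\bigl([C^\star_{X_1}\oplus C_{Y_1}]\ast[C^\star_{X_2}\oplus C_{Y_2}]\bigr)=\Delta([C^\star_{X_1}\oplus C_{Y_1}])\ast\Delta([C^\star_{X_2}\oplus C_{Y_2}]).
\]
For the left-hand side I would first expand the product using the structure-constant formula of Theorem \ref{thm: product formula}, obtaining a sum over $K,L\in\Iso(\cA)$, $\delta^0,\delta^1\in K_0(\cA)$ and auxiliary objects, of terms $K_{\delta^0}\ast K^\star_{\delta^1}\ast[C^\star_K\oplus C_L]$ with explicit coefficients, and then apply $\Delta$ termwise: $\Delta$ of $K_{\delta^0}\ast K^\star_{\delta^1}\ast[C^\star_K\oplus C_L]$ is computed from Definition \ref{def:coproduct of MHA}, which introduces a further internal sum over a decomposition $K=K_2\oplus_{\mathrm{ext}}T'\oplus_{\mathrm{ext}}K_1$, $L=L_1\oplus_{\mathrm{ext}}T'\oplus_{\mathrm{ext}}L_2$ (in the Hall sense, i.e. with Hall numbers $h^K_{K_2T'K_1}$, $h^L_{L_1T'L_2}$). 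On the right-hand side I would expand each $\Delta([C^\star_{X_i}\oplus C_{Y_i}])$ via Definition \ref{def:coproduct of MHA}(2), multiply the two tensor factors componentwise in $\SHall_{\bZ/2}(\cA)$ using Theorem \ref{thm: product formula} again together with the commutation relations of Lemma \ref{lemma: commutative of K and M} to move the grouplikes $K_{Y_2+T}$, $K^\star_{X_1+T}$ past the $[C^\star\oplus C]$ factors of the second tensor leg, and collect terms into the same basis form.

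Matching the two sides reduces to a purely combinatorial Hall-number identity relating iterated coproducts of products to iterated products of coproducts; this is exactly the kind of identity that Corollary \ref{cor: green formula-3,2} is designed to supply — it is the $(3,2)$-refinement of Green's formula, handling the extra "shared" object $T$ in the coproduct against the two-fold product. Concretely, after cancelling the $\upsilon$-power prefactors (which one checks match by a separate, mechanical computation with the Euler and symmetric Euler forms — this is tedious but not conceptually hard, using $(\alpha,\beta)=\la\alpha,\beta\ra+\la\beta,\alpha\ra$ and additivity of $\la-,-\ra$ over short exact sequences), the equality of structure constants becomes an instance of Corollary \ref{cor: green formula-3,2} (and, where three objects interact on one side, of the associativity relation \eqref{eq:AssOfHA}), applied with the roles of $M,N,K_1,K_2,C$ played by the various $X_i,Y_i,T$ and the summation objects.

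The main obstacle I anticipate is precisely this bookkeeping: keeping the numerous summation indices, automorphism-group factors $a_{(-)}$, and $\upsilon$-exponents organized so that the left- and right-hand expansions are manifestly the same sum after one substitution of Corollary \ref{cor: green formula-3,2}. I would manage it by (i) first proving the $\upsilon$-exponent identity in isolation as a sublemma about the quadratic form $N$ appearing in Definition \ref{def:coproduct of MHA} and Theorem \ref{thm: product formula}, so that the rest of the argument is about rational coefficients only; (ii) then grouping the Hall numbers on each side into the shape $\sum_K a_K h^K_{\bullet\bullet}h^K_{\bullet\bullet\bullet}$ versus a sum of products $h_{\bullet\bullet}h_{\bullet\bullet}h_{\bullet\bullet}h_{\bullet\bullet}h_{\bullet\bullet}$ with the prescribed $a_{(-)}^{\pm1}$ weights, and reading off that these coincide via Corollary \ref{cor: green formula-3,2}. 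Since $\Delta$ is already known (or will be shown in Lemma \ref{lemma:coassociative law}) to be coassociative and since $\epsilon$ is evidently a counit for it, once multiplicativity is established $(\SHall_{\bZ/2}(\cA),\Delta,\epsilon)$ is a bialgebra, and comparing $\Delta$ with $\Delta^+\otimes\Delta^-$ under the isomorphism $I$ of Theorem \ref{thm: BHA and MHA} identifies it with the bialgebra structure of $\DDHall(\cA)$.
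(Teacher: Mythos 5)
Your proposal follows essentially the same route as the paper's proof: reduce to the identity $\Delta([C^\star_{X_1}\oplus C_{Y_1}]\ast[C^\star_{X_2}\oplus C_{Y_2}])=\Delta([C^\star_{X_1}\oplus C_{Y_1}])\ast\Delta([C^\star_{X_2}\oplus C_{Y_2}])$, expand both sides via Theorem \ref{thm: product formula} and Definition \ref{def:coproduct of MHA}, convert the resulting Hall-number sums into matching form via Corollary \ref{cor: green formula-3,2} together with the associativity relation \eqref{eq:AssOfHA}, and verify the $\upsilon$-exponents separately using relations in $K_0(\cA)$ --- which is exactly the structure of the paper's argument, including its final step of checking $m_3=n_2$.
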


     Therefore, we can get our main result from the preceding statements:

     \begin{theorem}
     For any hereditary, abelian, essentially small, finitary $\bF_q$-category $\cA$, $(\SHall_{\bZ/2}(\cA),\Delta,\epsilon)$ is a bialgebra.\label{thm: bialgebra}
     \end{theorem}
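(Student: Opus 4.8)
The plan is to read the bialgebra axioms off from the two lemmas just stated together with two elementary checks. Recall that endowing $\SHall_{\bZ/2}(\cA)$ with a bialgebra structure means proving: (i) $(\SHall_{\bZ/2}(\cA),\Delta,\epsilon)$ is a coassociative, counital coalgebra; and (ii) both $\Delta$ and $\epsilon$ are morphisms of unital algebras. Coassociativity in (i) is exactly Lemma~\ref{lemma:coassociative law}, and the first half of (ii), namely that $\Delta$ is an algebra map, is exactly Lemma~\ref{lemma:coproduct is a morphism of algebra}; moreover $\Delta(\id)=\Delta(K_0)=K_0\otimes K_0=\id\otimes\id$ is immediate from Definition~\ref{def:coproduct of MHA}(1). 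So the only things left to check are the counit identities $(\epsilon\otimes\mathrm{id})\circ\Delta=\mathrm{id}=(\mathrm{id}\otimes\epsilon)\circ\Delta$ and the multiplicativity of $\epsilon$.

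For the counit identities I would invoke Theorem~\ref{thm: basis of MHA} to reduce to the algebra generators $K_\alpha$, $K^\star_\alpha$ and $[C^\star_X\oplus C_Y]$. On the $K$'s it is immediate from $\Delta(K_\alpha)=K_\alpha\otimes K_\alpha$ and $\epsilon(K_\alpha)=1$ (similarly for $K^\star_\alpha$). For $[C^\star_X\oplus C_Y]$ I would substitute Definition~\ref{def:coproduct of MHA}(2): applying $\epsilon$ to the left tensor leg kills every summand with $(X_1,Y_1)\neq(0,0)$, whereupon the iterated Hall numbers degenerate, $h^X_{X_2T0}=h^X_{X_2T}$ and $h^Y_{0TY_2}=h^Y_{TY_2}$, the exponent $N$ simplifies, and — after rewriting the surviving $[C^\star_\bullet\oplus C_\bullet]K_\bullet$ and $K^\star_\bullet$ factors into the basis of Theorem~\ref{thm: basis of MHA} by means of Lemma~\ref{lemma: commutative of K and M} — one is left with a Hall-number identity that one checks collapses the sum to $1\otimes[C^\star_X\oplus C_Y]$; the argument for $(\mathrm{id}\otimes\epsilon)\circ\Delta$ is symmetric, forcing $(X_2,Y_2)=(0,0)$ instead.

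For multiplicativity of $\epsilon$ I would again test on generators. By Lemma~\ref{lemma: commutative of K and M} any product of $K$'s and $K^\star$'s equals, up to an invertible scalar, a single element $K_\gamma\ast K^\star_\delta$, all of which $\epsilon$ sends to that scalar, so $\epsilon$ is multiplicative there. For a product $[C^\star_X\oplus C_Y]\ast[C^\star_{X'}\oplus C_{Y'}]$ I would expand via Theorem~\ref{thm: product formula} and note that the only basis vector occurring on the right-hand side that lies in the degree-zero line $\bC\cdot\id$ — hence the only one not annihilated by $\epsilon$ — is $\id$ itself, and it occurs with coefficient $1$ exactly when $X=Y=X'=Y'=0$; this gives $\epsilon(xy)=\epsilon(x)\epsilon(y)$ for $x,y$ of this form. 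Combining these four points yields the theorem.

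I do not expect the assembly above to be the real difficulty; the substance lies in the two lemmas it relies on, proved in Section~\ref{sec: proof}. The genuinely hard step is Lemma~\ref{lemma:coproduct is a morphism of algebra} — matching $\Delta$ applied to a product of two basis vectors against the product of their coproducts, summand by summand — for which one needs the full strength of Green's formula (Lemma~\ref{thm: green formula}), its three-factor refinement Corollary~\ref{cor: green formula-3,2}, and the explicit product formula of Theorem~\ref{thm: product formula} to get the bookkeeping of subquotients to close up.
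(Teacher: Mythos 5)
Your overall architecture is the same as the paper's: Theorem~\ref{thm: bialgebra} is assembled from Lemma~\ref{lemma:coassociative law} (coassociativity) and Lemma~\ref{lemma:coproduct is a morphism of algebra} (compatibility of $\Delta$ with the product), and you correctly locate all the substance in those two lemmas, proved in Section~\ref{sec: proof} via Green's formula, Corollary~\ref{cor: green formula-3,2} and Theorem~\ref{thm: product formula}. The paper itself says nothing beyond ``the theorem follows from the preceding statements,'' so your attempt to supply the remaining bialgebra axioms (the counit identities and the multiplicativity of $\epsilon$) goes beyond what is written there. Unfortunately, as sketched, those two extra checks are wrong, because they rest on reading $\epsilon$ as the linear map sending the basis vector $K_\alpha\ast K^\star_\beta\ast[C^\star_X\oplus C_Y]$ to $\delta_{X,0}\delta_{Y,0}$.

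Concretely, take $\cA$ the category of finite-dimensional $\bF_q$-vector spaces and $X=Y=S$ simple. In $\Delta([C^\star_S\oplus C_S])$ the summand with $X_1=X_2=Y_1=Y_2=0$ and $T=S$ equals $(q-1)\,K_S\otimes K^\star_S$ (coefficient $a_S^{-1}h^S_{0S0}h^S_{0S0}\cdot\frac{a_S}{1}\cdot\frac{a_S}{1}=q-1$, exponent $N=0$). It is \emph{not} killed by $\epsilon\otimes\mathrm{id}$, so
\[(\epsilon\otimes\mathrm{id})\Delta([C^\star_S\oplus C_S])=[C^\star_S\oplus C_S]+(q-1)K^\star_S\neq[C^\star_S\oplus C_S]:\]
the sum does not ``collapse to $1\otimes[C^\star_X\oplus C_Y]$'' as you claim, because after setting $(X_1,Y_1)=(0,0)$ the terms with $T\neq 0$ survive. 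Likewise $[C^\star_S]\ast[C_S]=[C^\star_S\oplus C_S]+(q-1)K_S$, so your $\epsilon$ returns $q-1$ on this product but $0=\epsilon([C^\star_S])\epsilon([C_S])$ on the factors; it is not multiplicative either. The repair is to take $\epsilon$ to be the counit transported from $\DDHall(\cA)$ along the isomorphism $I$ of Theorem~\ref{thm: BHA and MHA}, i.e.\ the algebra homomorphism determined by $\epsilon([C_M])=\epsilon([C^\star_M])=\delta_{M,0}$ and $\epsilon(K_\alpha)=\epsilon(K^\star_\alpha)=1$; on the basis this gives, e.g., $\epsilon([C^\star_S\oplus C_S])=-(q-1)$ rather than $0$. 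With that $\epsilon$, multiplicativity is automatic, and in the example above the $X_1=Y_1=S$ summand contributes exactly $-(q-1)K^\star_S$, cancelling the offending term so that the counit identity holds. So: same route as the paper for the part the paper actually proves, but the part you added needs the counit re-interpreted before your verification goes through.
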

     
     In conclusion, we can state the following corollary:
     
     \begin{corollary}
     Assume that $\cA$ is a hereditary, abelian, essentially small, finitary $\bF_q$-category . There is an isomorphism of bialgebras:
     \[I:\DDHall(\cA)\rightarrow\SHall_{\bZ/2}(A)\]
     defined on the generators by $\sum [A]k_\alpha\otimes [B]k_\beta\mapsto \sum I^+(\sum [A]k_\alpha)\ast I^-([B]k_\beta)=\sum [C_A]\ast K_\alpha\ast [C_B^\star]\ast K_\beta^\star$ whenever $A,B\in\cA$ and $\alpha,\beta\in K_0(\cA)$.
       
     \end{corollary}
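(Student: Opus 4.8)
The plan is to build on the fact that $I$ has, by Theorem~\ref{thm: BHA and MHA}, already been shown to be an isomorphism of \emph{algebras}, and that both $\DDHall(\cA)$ and $\SHall_{\bZ/2}(\cA)$ are bialgebras --- the former by Theorem~\ref{thm: DDH(A)}, the latter by Theorem~\ref{thm: bialgebra}. Thus it only remains to prove that $I$ respects the comultiplications and the counits, i.e.\
\[(I\otimes I)\circ\Delta_{\DDHall}=\Delta_{\SHall}\circ I\qquad\text{and}\qquad\epsilon_{\SHall}\circ I=\epsilon_{\DDHall},\]
with $I\otimes I$ understood as the induced map on the completed tensor products. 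Once these hold, $I$ is a bijective bialgebra morphism, hence a bialgebra isomorphism.

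The first step is to reduce to a verification on algebra generators. Because $I$ is a morphism of algebras, $\Delta_{\SHall}$ is one by Lemma~\ref{lemma:coproduct is a morphism of algebra}, and $\Delta_{\DDHall}$ is one as part of Theorem~\ref{thm: DDH(A)}, both $(I\otimes I)\circ\Delta_{\DDHall}$ and $\Delta_{\SHall}\circ I$ are morphisms of algebras $\DDHall(\cA)\to\SHall_{\bZ/2}(\cA)\widehat{\otimes}\SHall_{\bZ/2}(\cA)$; likewise $\epsilon_{\SHall}\circ I$ and $\epsilon_{\DDHall}$ are both algebra morphisms to $\bC$. Since (by Theorem~\ref{thm: DDH(A)}) $\DDHall(\cA)$ is generated as an algebra by the elements $k_\alpha\otimes 1$, $1\otimes k_\beta$, $[A]\otimes 1$ and $1\otimes[B]$ for $A,B\in\cA$, $\alpha,\beta\in K_0(\cA)$, and two algebra morphisms agreeing on a generating set coincide, it suffices to verify the four identities on these generators.

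For the group-like generators the check is immediate: $k_\alpha$, $k_\beta$ are group-like in the double, $I(k_\alpha\otimes 1)=K_\alpha$, $I(1\otimes k_\beta)=K^\star_\beta$, and Definition~\ref{def:coproduct of MHA}(1) gives $\Delta_{\SHall}(K_\alpha)=K_\alpha\otimes K_\alpha$, $\Delta_{\SHall}(K^\star_\beta)=K^\star_\beta\otimes K^\star_\beta$, while $\epsilon_{\SHall}(K_\alpha)=\epsilon_{\SHall}(K^\star_\beta)=1$. For $[A]\otimes 1$ one has $I([A]\otimes 1)=[C_A]=[C^\star_0\oplus C_A]$; I would unwind Definition~\ref{def:coproduct of MHA}(2) at $X=0$, where $h^{0}_{X_2TX_1}$ is nonzero only for $X_1=T=X_2=0$, so the sum collapses, $a_T^{-1}=1$, and the exponent $N$ reduces to $\la\wh{Y_1},\wh{Y_2}\ra$. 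The outcome, $\sum_{Y_1,Y_2}\upsilon^{\la\wh{Y_1},\wh{Y_2}\ra}h^{A}_{Y_1Y_2}\frac{a_A}{a_{Y_1}a_{Y_2}}\big([C_{Y_1}]\ast K_{\wh{Y_2}}\big)\otimes[C_{Y_2}]$, matches $(I\otimes I)\Delta_{\DDHall}([A]\otimes 1)=(I\otimes I)\big(\Delta^{+}([A])\otimes\Delta^{-}(1)\big)$ after applying $I^{+}$, $I^{-}$ and performing the transposition of the inner two tensor slots built into $\Delta_{\DDHall}(a\otimes b)=\Delta^{+}(a)\otimes\Delta^{-}(b)$, using the formula for $\Delta^{+}$ recalled in Section~\ref{sec: priminary}. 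The case $1\otimes[B]$ is the mirror image: unwinding Definition~\ref{def:coproduct of MHA}(2) at $Y=0$ forces $Y_1=T=Y_2=0$, the exponent $N$ becomes $\la\wh{X_2},\wh{X_1}\ra$, and the result matches $(I\otimes I)\big(\Delta^{+}(1)\otimes\Delta^{-}([B])\big)$. For the counits, $\epsilon_{\SHall}([C_A])=\delta_{[A],[0]}=\epsilon_{\DDHall}([A]\otimes 1)$ and $\epsilon_{\SHall}([C^\star_B])=\delta_{[B],[0]}=\epsilon_{\DDHall}(1\otimes[B])$.

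What requires care --- though it is bookkeeping rather than a genuine obstacle --- is twofold. First, one must track the transposition of tensor factors hidden in $\Delta_{\DDHall}(a\otimes b)=\Delta^{+}(a)\otimes\Delta^{-}(b)$, so that the two slots of $\SHall_{\bZ/2}(\cA)\widehat{\otimes}\SHall_{\bZ/2}(\cA)$ are paired with the correct halves of the double. Second, one must check that the $\upsilon$-exponent $N$ of Definition~\ref{def:coproduct of MHA}(2), together with the automorphism factor $\frac{a_X}{a_{X_1}a_{X_2}}\cdot\frac{a_Y}{a_{Y_1}a_{Y_2}}$, degenerates precisely to the coefficient $\upsilon^{\la\wh{M},\wh{N}\ra}h^{R}_{MN}\frac{a_R}{a_Ma_N}$ of Green's coproduct when $X$ or $Y$ is zero. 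Granted Theorems~\ref{thm: BHA and MHA}, \ref{thm: DDH(A)} and \ref{thm: bialgebra}, everything else is routine.
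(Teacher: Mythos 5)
Your proposal is correct and follows essentially the same route as the paper: both reduce, via the algebra-morphism property of $I$ and of the two coproducts (Theorem~\ref{thm: DDH(A)}, Lemma~\ref{lemma:coproduct is a morphism of algebra}), to checking compatibility on the generators $[A]\otimes 1$ and $1\otimes[B]$, and then match $(I\otimes I)\Delta^{+}([A])$ against the specialization of Definition~\ref{def:coproduct of MHA}(2) at $X=0$ (resp.\ $Y=0$). You are somewhat more explicit than the paper in spelling out the group-like and counit checks and the collapse of the sum when $X=0$, but these are exactly the steps the paper leaves implicit.
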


    \begin{proof}
    According to the preceding definition, it suffices to prove
    \begin{align}
    \label{eq:iso}
    I\otimes I(\Delta([A]\otimes[B]))=\Delta([C_A]\ast[C_B^\star]),\forall A,B\in\cA.\end{align}
    Since $\DDHall(\cA)$ is a bialgebra and $I$ is a morphism of algebras, the left hand side of equation \eqref{eq:iso} is given as follows
    \begin{eqnarray*}
    LHS\eqref{eq:iso}&=&I\otimes I(\Delta([A]\otimes 1)\ast(\Delta(1\otimes[B]))\\
    &=&I\otimes I(\Delta([A]\otimes 1))\ast I\otimes I(\Delta(1\otimes[B]).
    \end{eqnarray*}
    According to Lemma \ref{lemma:coproduct is a morphism of algebra},  $\Delta([C_A]\ast[C_B^\star])=\Delta([C_A])\ast\Delta([C_B^\star])$. Therefore, it suffices to show that
    \[I\otimes I(\Delta([A]\otimes 1))=\Delta([C_A])\]
    and
    \[I\otimes I(\Delta(1\otimes[B]))=\Delta([C_B^\star]).\]
    Given that $\Delta([A])=\sum \upsilon^{\la \widehat{A_1},\widehat{A_2} \ra} h_{A_1A_2}^A\displaystyle{\frac{a_A}{a_{A_1}a_{A_2}}} [A_1]k_{A_2}\otimes [A_2]$, we have
    \[\Delta([A]\otimes 1)=\sum\limits_{A_1,A_2} \upsilon^{\la \widehat{A_1},\widehat{A_2} \ra} h_{A_1A_2}^A\frac{a_A}{a_{A_1}a_{A_2}} ([A_1]k_{A_2}\otimes 1)\otimes ([A_2]\otimes 1).\]
    It follows by
    \[I\otimes I(\Delta([A]\otimes 1))=\sum\limits_{A_1,A_2} \upsilon^{\la \widehat{A_1},\widehat{A_2} \ra} h_{A_1A_2}^A\frac{a_A}{a_{A_1}a_{A_2}} [C_{A_1}]K_{A_2}\otimes [C_{A_2}]=\Delta([C_A]).\]
    The proof of $I\otimes I(\Delta(1\otimes[B]))=\Delta([C_B^\star])$ is similar. 
    \end{proof}
                                                        
\section{Proofs of  Lemmas \ref{lemma:coassociative law}, \ref{lemma:coproduct is a morphism of algebra}}\label{sec: proof}

\subsection{Proof of Lemma \ref{lemma:coassociative law}}

     By Definition \ref{thm: basis of MHA}, it suffices to prove that for any $[C^\star_{X}\oplus C_Y]\in\SHall_{\bZ/2}(\cA)$, we have
     \begin{align}
     \label{eq:coass}
     \Delta\otimes\id(\Delta([C^\star_{X}\oplus C_Y]))=\id\otimes\Delta(\Delta([C^\star_{X}\oplus C_Y])). 
     \end{align}
     According to  Definition \ref{def:coproduct of MHA}, the right hand side of equation \eqref{eq:coass} is as follows:
    \begin{eqnarray*}
     \setlength\abovedisplayskip{.5pt}%shrink space
     \setlength\belowdisplayskip{.5pt}
     %row1
     &&RHS\eqref{eq:coass}=\id\otimes\Delta(\Delta([C^\star_X\oplus C_Y]))\\
     %row2
     &=&\id\otimes\Delta\left(\sum\limits_{\substack{X_1,T,X_2\\Y_1,Y_2}}\upsilon^{n_1}h^X_{X_2TX_1}h^Y_{Y_1TY_2}\frac{a_X}{a_{X_2}a_{X_1}}\cdot\frac{a_Y}{a_{Y_2}a_Ta_{Y_1}}[C^\star_{X_1}\oplus C_{Y_1}]K_{Y_2+T}\otimes[C^\star_{X_2}\oplus C_{Y_2}]K^\star_{X_1+T}\right)\\
     %row3
     &=&\sum\limits_{\substack{X^1_2,S,X^2_2\\Y^1_2,Y^2_2}}\sum\limits_{\substack{X_1,T,X_2\\Y_1,Y_2}}\upsilon^{n_2}h^X_{X_2TX_1}h^Y_{Y_1TY_2}h^{X_2}_{X^2_2SX^1_2}h^{Y_2}_{Y^1_2SY^2_2}\frac{a_X}{a_{X_2}a_{X_1}}\cdot\frac{a_Y}{a_{Y_2}a_Ta_{Y_1}}\frac{a_{X_2}}{a_{X^2_2}a_{X^1_2}}\cdot\frac{a_{Y_2}}{a_{Y^2_2}a_Sa_{Y^1_2}}\\
     %row4
     &&\cdot [C^\star_{X_1}\oplus C_{Y_1}]K_{Y_2+T}\otimes[C^\star_{X_2^1}\oplus C_{Y_2^1}]K^\star_{X_1+T}K_{Y_2^2+S}\otimes[C^\star_{X_2^2}\oplus C_{Y_2^2}]K^\star_{X_1+T+X_2^1+S} \\
      \end{eqnarray*}
     %row5
\begin{align}
      \label{eq: row 5 of RHS of coass}
      &=\sum\limits_{\substack{X^1_2,S,X^2_2\\Y^1_2,Y^2_2}}\sum\limits_{\substack{X_1,T,Y_1}}\upsilon^{n_3}h^X_{X^2_2SX^1_2TX_1}h^Y_{Y_1TY^1_2SY^2_2}\frac{a_X}{a_{X_2^2}a_{X_2^1}a_{X_1}}\cdot\frac{a_Y}{a_{Y^2_2}a_Sa_{Y^1_2}a_Ta_{Y_1}}\\
     %row5
     &\cdot[C^\star_{X_1}\oplus C_{Y_1}]\ast K_{Y_2+T}\otimes[C^\star_{X_2^1}\oplus C_{Y_2^1}]K^\star_{X_1+T}K_{Y_2^2+S}\otimes[C^\star_{X_2^2}\oplus C_{Y_2^2}]K^\star_{X_1+T+X_2^1+S},\notag \\\notag
     \end{align}
     where 
     \begin{eqnarray*}
     %row1
     &n_1=&\la \widehat{X_2},\widehat{X_1}+\widehat{T} \ra+\la \widehat{Y_1},\widehat{Y_2}+\widehat{T} \ra+\la \widehat{T},\widehat{Y_2 }\ra-\la \widehat{X_1}+\widehat{T},\widehat{Y_2} \ra-\la \widehat{Y_2},\widehat{X_1}+\widehat{T} \ra-\la \widehat{X_1},\widehat{T} \ra,\\[1mm]
     %row2
     &n_2=&\la \widehat{X_2},\widehat{X_1}+\widehat{T} \ra+\la \widehat{Y_1},\widehat{Y_2}+\widehat{T} \ra+\la \widehat{T},\widehat{Y_2 }\ra-\la\widehat{X_1}+\widehat{T},\widehat{Y_2} \ra-\la \widehat{Y_2},\widehat{X_1}+\widehat{T} \ra-\la \widehat{X_1},\widehat{T} \ra\\[1mm]
     %row3
     &&+\la \widehat{X^2_2},\widehat{X^1_2}+\widehat{S} \ra+\la \widehat{Y^1_2},\widehat{Y^2_2}+\widehat{S} \ra+\la \widehat{S},\widehat{Y_2^2} \ra-\la \widehat{X^1_2}+\widehat{S},\widehat{Y^2_2} \ra-\la \widehat{Y^2_2},\widehat{X^1_2}+\widehat{S} \ra-\la \widehat{X^1_2},\widehat{S} \ra,\\[1mm]
     %row4
     &n_3=&\la \widehat{X_2^2}\!+\!\widehat{S}\!+\!\widehat{X_2^1},\widehat{X_1}\!+\!\widehat{T} \ra\!+\!\la \widehat{Y_1},\widehat{Y_2^1}\!+\!\widehat{S}\!+\!\widehat{Y_2^2}\!+\!\widehat{T} \ra\!+\!\la \widehat{T },\widehat{Y_2^1}\!+\!\widehat{S}\!+\!\widehat{Y_2^2}\ra\!-\!\la\widehat{X_1}\!+\!\widehat{T},\widehat{Y_2^1}\!+\!\widehat{S}\!+\!\widehat{Y_2^2} \ra\\[1mm]
     %row5
     &&\!-\!\la \widehat{Y^1_2}\!+\!\widehat{S}\!+\!\widehat{Y_2^2},\widehat{X_1}\!+\!\widehat{T} \ra\!-\!\la \widehat{X_1},\widehat{T} \ra\!+\!\la \widehat{X^2_2},\widehat{X^1_2}\!+\!\widehat{S} \ra\!+\!\la \widehat{Y^1_2},\widehat{Y^2_2}\!+\!\widehat{S} \ra\!+\!\la \widehat{S},\widehat{Y^2_2} \ra\!-\!\la \widehat{X^1_2}\!+\!\widehat{S},\widehat{Y^2_2} \ra\\[1mm]
     %row6
     &&\!-\!\la \widehat{Y^2_2},\widehat{X^1_2}\!+\!\widehat{S} \ra\!-\!\la \widehat{X^1_2},\widehat{S} \ra.
     \end{eqnarray*}
     And the equation (\ref{eq: row 5 of RHS of coass}) follows from the relation $\widehat{X_2}=\widehat{X_2^1}+\widehat{S}+\widehat{X_2^2}$ and $\widehat{Y_2}=\widehat{Y_2^1}+\widehat{S}+
     \widehat{Y_2^2}$ in $K_0(\cA)$.
     We denote $\widehat{P_X}=\widehat{X_2^1}+\widehat{T}+\widehat{X_1}$, $\widehat{P_Y}=\widehat{Y_1}+\widehat{T}+\widehat{Y_2^1}$,  $m_1=\la \widehat{X_2^2},\widehat{P_X}+\widehat{S}\ra+\la \widehat{P_Y},\widehat{S}+\widehat{Y_2^2} \ra+\la \widehat{S},\widehat{Y^2_2} \ra-\la \widehat{P_X}+\widehat{S},\widehat{Y^2_2} \ra-\la \widehat{Y^2_2},\widehat{P_X}+\widehat{S} \ra-\la\widehat{P_X},\widehat{S} \ra$ and $m_2=\la \widehat{X^1_2},\widehat{X_1}+\widehat{T} \ra+\la \widehat{Y_1},\widehat{Y^1_2}+\widehat{T} \ra+\la\widehat{T}, \widehat{Y_2^1}\ra-\la\widehat{X_1}+\widehat{T},\widehat{Y_2^1}\ra-\la\widehat{Y_2^1},\widehat{X_1}+\widehat{T}\ra-\la \widehat{X_1},\widehat{T} \ra$. It is not hard to check that $n_3=m_1+m_2$, by replacing $n_3$ with $m_1+m_2$, the equation (\ref{eq: row 5 of RHS of coass}) is equal to
     \begin{eqnarray*}
     \setlength\abovedisplayskip{.5pt}%shrink space
     \setlength\belowdisplayskip{.5pt}
     &=&\sum\limits_{\substack{P_X,S,X^2_2\\P_Y,Y^2_2}}\sum\limits_{\substack{X_2^1,T,X_1\\Y_1,Y_2^1}}\upsilon^{m_1+m_2}h^X_{X^2_2SP_X}h^Y_{P_YSY^2_2}h^{P_X}_{X_2^1TX_1}h^{P_Y}_{Y_1TY_2^1}\frac{a_X}{a_{X_2}a_{X_1}}\cdot\frac{a_Y}{a_{Y_2}a_Ta_{Y_1}}\frac{a_{X_2}}{a_{X^2_2}a_Sa_{X^1_2}}\\
     &&\cdot\frac{a_{Y_2}}{a_{Y^2_2}a_{Y^1_2}}[C^\star_{X_1}\oplus C_{Y_1}]K_{Y^1_2+T+Y_2^2+S}\otimes[C^\star_{X_2^1}\oplus C_{Y_2^1}]K^\star_{X_1+T}K_{Y_2^2+S}\otimes[C^\star_{X_2^2}\oplus C_{Y_2^2}]K^\star_{P_X+S} \\
     \end{eqnarray*}
     \begin{eqnarray*}
     \setlength\abovedisplayskip{.5pt}%shrink space
     \setlength\belowdisplayskip{.5pt}
     &=&\sum\limits_{\substack{P_X,S,X^2_2\\P_Y,Y^2_2}}\sum\limits_{\substack{X_2^1,T,X_1\\Y_1,Y_2^1}}\upsilon^{m_1+m_2}h^X_{X^2_2SP_X}h^Y_{P_YSY^2_2}h^{P_X}_{X_2^1TX_1}h^{P_Y}_{Y_1TY_2^1}\frac{a_X}{a_{X^2_2}a_Sa_{P_X}}\cdot\frac{a_Y}{a_{P_Y}a_{Y^2_2}}\frac{a_{P_X}}{a_{X^1_2}a_Ta_{X_1}}\\
     &&\cdot\frac{a_{P_Y}}{a_{Y_1}a_{Y^1_2}}[C^\star_{X_1}\oplus C_{Y_1}]K_{Y^1_2+T+Y_2^2+S}\otimes[C^\star_{X_2^1}\oplus C_{Y_2^1}]K^\star_{X_1+T}K_{Y_2^2+S}\otimes[C^\star_{X_2^2}\oplus C_{Y_2^2}]K^\star_{P_X+S}. \\
     \end{eqnarray*}

 Due to $\widehat{X}=\widehat{P_X}+\widehat{S}+\widehat{X^2_2}$ and $\widehat{Y}=\widehat{Y_2^2}+\widehat{S}+\widehat{P_Y}$, it follows that $\Delta([C^\star_X\oplus C_Y])$ and $\Delta([C^\star_{P_X}\oplus C_{P_Y}])$ can be rewritten as:
     \begin{align*}
     &\Delta([C^\star_X\oplus C_Y])=\sum\limits_{\substack{P_X,S,X_2^2\\P_Y,Y_2^2}}\upsilon^{m_1}h^X_{X_2^2SP_X}h^Y_{P_YSY_2^2}\frac{a_X}{a_{X^2_2}a_{P_X}}\cdot\frac{a_Y}{a_{P_Y}a_Sa_{Y^2_2}}[C^\star_{P_X}\oplus C_{P_Y}]K_{Y_2^2+S}\otimes[C^\star_{X_2^2}\oplus C_{Y_2^2}]K^\star_{P_X+S},\\
     &\Delta([C^\star_{P_X}\oplus C_{P_Y}])=\sum\limits_{\substack{X_2^1,T,X_1\\Y_1,Y_2^1}}\upsilon^{m_2}h^{P_X}_{X_2^1TX_1}h^{P_Y}_{Y_1TY_2^1}\frac{a_{P_X}}{a_{X^1_2}a_{X_1}}\cdot\frac{a_{P_Y}}{a_{Y_1}a_Ta_{Y^1_2}}[C^\star_{X_1}\oplus C_{Y_1}]K_{Y_2^1+T}\otimes[C^\star_{X_2^1}\oplus C_{Y_2^1}]K^\star_{X_1+T}.
     \end{align*}
     Then we get %the right hand side of \eqref{eq:coass} is equal to
     \begin{align*}
     %row1
     RHS\eqref{eq:coass}=&\sum\limits_{\substack{P_X,S,X^2_2\\P_Y,Y^2_2}}\upsilon^{m_1}h^X_{X^2_2SP_X}h^Y_{P_YSY^2_2}\frac{a_Xa_Y}{a_{X^2_2}a_{P_X}a_{P_Y}a_S a_{Y^2_2}}\Bigg(\sum\limits_{\substack{X_1,T,X_2^1\\Y_1,Y_2^1}}\upsilon^{m_2}\frac{a_{P_X}a_{P_Y}}{a_{X^1_2}a_{X_1}a_{Y_1}a_T a_{Y^1_2}}h^{P_X}_{X_2^1TX_1}h^{P_Y}_{Y_1TY_2^1}\\
     &[C^\star_{X_1}\oplus C_{Y_1}]K_{Y^1_2+T}K_{Y_2^2+S}\otimes[C^\star_{X_2^1}\oplus C_{Y_2^1}]K^\star_{X_1+T}K_{Y_2^2+S}\Bigg)\otimes[C^\star_{X_2^2}\oplus C_{Y_2^2}]K^\star_{P_X+S}\\
     %row2
     =&\Delta\!\otimes\!\id\left(\sum\limits_{\substack{X_1,T,X_2\\Y_1,Y_2}}\!\upsilon^{m_1}\frac{a_X}{a_{X^2_2}a_{P_X}}\cdot\frac{a_Y}{a_{P_Y}a_Sa_{Y^2_2}}h^X_{X^2_2SP_X}h^Y_{P_YSY^2_2}[C^\star_{P_X}\!\oplus\!C_{P_Y}]K_{Y_2^2\!+\!S}\!\otimes\![C^\star_{X_2^2}\!\oplus\!C_{Y_2^2}]K^\star_{P_X\!+\!S} \right)\\
     %row4
     =&\Delta\otimes\id\left( \Delta([C^\star_X\oplus C_Y]) \right)\\
     %row5
     =&LHS\eqref{eq:coass},
     \end{align*}
     where LHS \eqref{eq:coass} represents left-hand side of \eqref{eq:coass}. The proof is complete.

\subsection{Proof of Lemma \ref{lemma:coproduct is a morphism of algebra}}
     By Definition \ref{def:coproduct of MHA}, it suffices to prove that 
     \begin{equation}\label{eqn:coproduct is alg morphism}
     \Delta([C^\star_{X_1}\oplus C_{Y_1}]\ast[C^\star_{X_2}\oplus C_{Y_2}])=\Delta([C^\star_{X_1}\oplus C_{Y_1}])\ast \Delta([C^\star_{X_2}\oplus C_{Y_2}]).
     \end{equation}
     By Theorem \ref{thm: product formula}, we have
     \begin{align}
     %row1
     &\Delta([C^\star_{X_1}\oplus C_{Y_1}]\ast [C^\star_{X_2}\oplus C_{Y_2}])\notag\\
     %row2
     &=\Delta\Bigg(\sum\limits_{\substack{\delta^0,\delta^1\in K_0(\cA)\\K,L\in \Iso(\cA)}} \sum\limits_{\substack{S^0,S^1,M^0,M^1\\N^0,N^1\in\Iso(\cA)\\\hat{S}^0=\delta^0,\hat{S}^1=\delta^1}} \upsilon^{n_0}\frac{a_{X_1}a_{X_2}a_{Y_1}a_{Y_2}}{a_{M^1}a_{N^1}a_{M^0}a_{N^0}a_{S^0}a_{S^1}}h^{X_2}_{N^1S^1}h^K_{M^0N^1}h^{X_1}_{S^0M^0}h^{Y_2}_{N^0S^0}h^{L}_{M^1N^0}h^{Y_1}_{S^1M^1}\notag\\
     %row3
     &\qquad\qquad\upsilon^{n_0}K_{\delta^0}\ast K^\star_{\delta^1}\ast[C^\star_{K}\oplus C_{L}] \Bigg)\notag\\
     %row4
     &=\sum\limits_{\substack{\delta^0,\delta^1\in K_0(\cA)\\K,L\in \Iso(\cA)}} \sum\limits_{\substack{S^0,S^1,M^0,M^1\\N^0,N^1\in\Iso(\cA)\\\hat{S}^0=\delta^0,\hat{S}^1=\delta^1}} a_T\upsilon^{n_1}\frac{a_{X_1}a_{X_2}a_{Y_1}a_{Y_2}a_Ka_L}{a_{M^1}a_{N^1}a_{M^0}a_{N^0}a_{S^0}a_{S^1}a_{K_1}a_Ta_{K^0}a_{L^0}a_Ta_{L^1}}\sum\limits_{\substack{K^0,T,K^1\\L^0,L^1}}h^{X_2}_{N^1S^1}h^K_{M^0N^1}\label{eqn: l-4.1}\\
     &h^{X_1}_{S^0M^0}h^{Y_2}_{N^0S^0}h^{L}_{M^1N^0}h^{Y_1}_{S^1M^1}h^K_{K^1TK^0}h^L_{L^0TL^1}K_{\delta^0+\widehat{L^1}+\widehat{T}} K^\star_{\delta^1}[C^\star_{K^0}\oplus C_{L^0}]\otimes K_{\delta^0} K^\star_{\delta^1+\widehat{K^0}+\widehat{T}}[C^\star_{K^1}\oplus C_{L^1}],\notag
     \end{align}
     where
     \begin{eqnarray*}
     %row1
     &n_0=&2(\la \delta^0,\widehat{K} \ra\!+\!\la \delta^1,\widehat{L} \ra\!-\!\la \delta^0,\widehat{L} \ra\!-\!\la \delta^1,\widehat{K} \ra\!+\!\la M^0,N^1 \ra\!+\!\la M^1,N^0 \ra\!-\!\la Y^1,\delta^0 \ra\!-\!\la X^1,\delta^1\ra\!\\
     &&-\!\la \delta^0,\widehat{N^1} \ra\!-\!\la \delta^1,\widehat{N^0}\ra),\\
     %row2
     &n_1=&2(\la \delta^0,\widehat{K} \ra\!+\!\la \delta^1,\widehat{L} \ra\!-\!\la \delta^0,\widehat{L} \ra\!-\!\la \delta^1,\widehat{K} \ra\!+\!\la M^0,N^1 \ra\!+\!\la M^1,N^0 \ra\!-\!\la Y^1,\delta^0 \ra\!-\!\la X^1,\delta^1\ra\!\\
     %row3
     &&-\!\la \delta^0,\widehat{N^1} \ra\!-\!\la \delta^1,\widehat{N^0} \ra)+\la \widehat{K^1},\widehat{K^0}\!+\!\widehat{T} \ra\!+\!\la \widehat{L^0},\widehat{L^1}\!+\!\widehat{T} \ra\!+\!(\widehat{L^1}\!+\!\widehat{T},\widehat{K^0})\!+\!\la \widehat{T},\widehat{L^1} \ra\!\\
     &&-\!( \widehat{K^0}\!+\!\widehat{T},\widehat{K^1} )\!-\!(\widehat{L^1}\!+\!\widehat{T},\widehat{L^0})\!-\!\la \widehat{K^0},\widehat{T} \ra;
     \end{eqnarray*}
     Let $A=\sum\limits_{\substack{M^0,M^1\\N^0,N^1}}\sum\limits_{K,L,T}\frac{a_Ka_L}{a_M^1a_N^1a_M^0a_N^0} h^{X_2}_{N^1S^1}h^K_{M^0N^1}h^{X_1}_{S^0M^0}h^{Y_2}_{N^0S^0}h^{L}_{M^1N^0}h^{Y_1}_{S^1M^1}h^K_{K^1TK^0}h^L_{L^0TL^1}$, together with Corollary \ref{cor: green formula-3,2}, we have
     \begin{align*}
     %row1
     &A= \sum\limits_{\substack{M^0,M^1\\N^0,N^1}}\sum\limits_{K,L,T}\frac{a_Ka_L}{a_M^1a_N^1a_M^0a_N^0}h^{X_2}_{N^1S^1}h^K_{M^0N^1}h^{X_1}_{S^0M^0}h^{Y_2}_{N^0S^0}h^{L}_{M^1N^0}h^{Y_1}_{S^1M^1}h^K_{K^1TK^0}h^L_{L^0TL^1}\\
     %row2
     &=\sum\limits_{\substack{M^0,M^1\\N^0,N^1}}h^{X_2}_{N^1S^1}h^{X_1}_{S^0M^0}h^{Y_2}_{N^0S^0}h^{Y_1}_{S^1M^1}\sum\limits_{T}\left(\sum\limits_{\substack{M^0_1,M^0_2,T_1\\N^1_1,N^1_2,T_2}}h^{M^0}_{M^0_1T_1M^0_2}h^{N^1}_{N_1^1T_2N^1_2}h^{K^1}_{M^0_1N^1_1}h^T_{T_1T_2}h^{K^0}_{M^0_2N^1_2}\right)\\
     %row3
     &\cdot\left(\sum\limits_{\substack{M^1_1,M^1_2,E_1\\N^0_1,N^0_2,E_2}}h^{M^1}_{M^1_1E_1M^1_2}h^{N^0}_{N^0_1E_2N^0_2}h^{L^0}_{M^1_1N^0_1}h^T_{E_1E_2}h^{L^1}_{M_2^1N_2^0}\right)\upsilon^{-2\left(\la\widehat{M^0_1},\widehat{T_2}+\widehat{N_2^1}\ra+\la \widehat{T_1},\widehat{N_2^1} \ra+\la \widehat{E_1},\widehat{E_2}+\widehat{N_2^0} \ra+\la \widehat{E_1},\widehat{N_2^0} \ra\right)}\\[2mm]
     &\cdot\frac{a_{K^0}a_Ta_{K^1}a_{L^0}a_Ta_{L^1}}{a_{M_1^0}a_{T_1}a_{M_2^0}a_{N_1^1}a_{T_2}a_{N_2^1}a_{M_1^1}a_{E_1}a_{M_2^1}a_{N_1^0}a_{E_2}a_{N_2^0}}\\[2mm]
     &=\sum\limits_{\substack{M^0,M^1\\N^0,N^1}}\sum\limits_{\substack{M^0_1,M^0_2,T_1\\N^1_1,N^1_2,T_2}}\sum\limits_{\substack{M^1_1,M^1_2,E_1\\N^0_1,N^0_2,E_2}}\sum\limits_{T_1^0,T_1^1,T_2^0,T_2^1}h^{T_1}_{T^0_1T^1_1}h^{T_2}_{T^0_2T^1_2}h^{E_1}_{T_1^0T_2^0}h^{E_2}_{T_1^1T_2^1}h^{M^0}_{M^0_1T_1M^0_2}h^{N^1}_{N_1^1T_2N^1_2}h^{K^1}_{M^0_1N^1_1}h^{K^0}_{M^0_2N^1_2}\\
     %row6
     &\cdot h^{M^1}_{M_1^1E_1M_2^1}h^{N^0}_{N^0_1E_2N^0_2}h^{L^0}_{M^1_1N^0_1}h^{L^1}_{M_2^1N_2^0}h^{X_2}_{N^1S^1}h^{X_1}_{S^0M^0}h^{Y_2}_{N^0S^0}h^{Y_1}_{M^1S^1}\!\cdot\!\upsilon^{-2\left(\la\widehat{M^0_1},\widehat{T_2}\!+\!\widehat{N_2^1}\ra\!+\!\la \widehat{T_1},\widehat{N_2^1} \ra\!+\!\la \widehat{E_1},\widehat{E_2}\!+\!\widehat{N_2^0} \ra\!+\!\la \widehat{E_1},\widehat{N_2^0} \ra\right)}\\[5.5mm]
     %row6
     &\cdot\frac{a_{K^0}a_Ta_{K^1}a_{L^0}a_{L^1}}{a_{M_1^0}a_{M_2^0}a_{N_1^1}a_{N_2^1}a_{M_1^1}a_{M_2^1}a_{N_1^0}a_{N_2^0}a_{T_1^0}a_{T_1^1}a_{T_2^0}a_{T_2^1}}\\
     &\\
     %row7
     &=\sum\limits_{\substack{M^0_1,M^0_2\\N^1_1,N^1_2}}\sum\limits_{\substack{M^1_1,M^1_2\\N^0_1,N^0_2}}\sum\limits_{T_1^0,T_1^1,T_2^0,T_2^1}h^{X_1}_{S^0M_1^0T_1^0T_1^1M_2^0}h^{Y_1}_{S^1M_1^1T_1^0T_2^0M_2^1}h^{X_2}_{N^1_1T_2^0T_2^1N^1_2S^1}h^{Y_2}_{N^0_1T_1^1T_2^1N^0_2S^0}h_{M_1^0N_1^1}^{K^1}h_{M_2^0N_2^1}^{K^0}h_{M_1^1N_1^0}^{L^0}h_{M_2^1N_2^0}^{L^1}\\
     %row8
     &\cdot\upsilon^{-2\left(\la\widehat{M^0_1},\widehat{T_2}\!+\!\widehat{N_2^1}\ra\!+\!\la \widehat{T_1},\widehat{N_2^1} \ra\!+\!\la \widehat{T}_1^0 + \widehat{T}_2^0,\widehat{T}_1^1 + \widehat{T}_2^1\!+\!\widehat{N_2^0} \ra\!+\!\la \widehat{T}_1^0 + \widehat{T}_2^0,\widehat{N_2^0} \ra\right)}\!\cdot\!\frac{a_{K^0}a_Ta_{K^1}a_{L^0}a_{L^1}}{a_{M_1^0}a_{M_2^0}a_{N_1^1}a_{N_2^1}a_{M_1^1}a_{M_2^1}a_{N_1^0}a_{N_2^0}a_{T_1^0}a_{T_1^1}a_{T_2^0}a_{T_2^1}}.%\\[1mm]
     \end{align*}
     Hence, we obtain equation (\ref{eqn: l-4.1}) is equal to 
     \begin{align*}
     %row1
&\sum\limits_{\substack{S^0,S^1}}\sum\limits_{\substack{M^0_1,M^0_2\\N^1_1,N^1_2}}\sum\limits_{\substack{M^1_1,M^1_2\\N^0_1,N^0_2}}\sum\limits_{T_1^0,T_1^1,T_2^0,T_2^1}h^{X_1}_{S^0M_1^0T_1^0T_1^1M_2^0}h^{Y_1}_{S^1M_1^1T_1^0T_2^0M_2^1}h^{X_2}_{N^1_1T_2^0T_2^1N^1_2S^1}h^{Y_2}_{N^0_1T_1^1T_2^1N^0_2S^0}\\[2.5mm]
     %row2
     &\cdot\frac{a_{K^0}a_{K^1}a_{L^0}a_{L^1}a_{X_1}a_{X_2}a_{Y_1}a_{Y_2}}{a_{M_1^0}a_{M_2^0}a_{N_1^1}a_{N_2^1}a_{M_1^1}a_{M_2^1}a_{N_1^0}a_{N_2^0}a_{T_1^0}a_{T_1^1}a_{T_2^0}a_{T_2^1}}\!\cdot\!\upsilon^{n_2}K_{\widehat{S^0}\!+\!\widehat{L^1}\!+\!\widehat{T}}\ast K^\star_{\delta^1}\ast[C^\star_{K^0}\oplus C_{L^0}]\\[2.5mm]
     &\otimes K_{\widehat{S^0}}\!\ast\!K^\star_{\widehat{S^1}\!+\!\widehat{K^0}\!+\!\widehat{T}}[C^\star_{K^1}\oplus C_{L^1}]\\
     \end{align*}
     where $n_2=\!2(\la \delta^0,\widehat{K} \ra\!+\!\la \delta^1,\widehat{L} \ra\!-\!\la \delta^0,\widehat{L} \ra\!-\!\la \delta^1,\widehat{K} \ra\!+\!\la M^0,N^1 \ra\!+\!\la M^1,N^0 \ra\!-\!\la Y^1,\delta^0 \ra\!-\!\la X^1,\delta^1\ra\!-\!\la \delta^0,\widehat{N^1} \ra\!-\!\la \delta^1,\widehat{N^0} \ra)+\la \widehat{K^1},\widehat{K^0}+\widehat{T} \ra+\la \widehat{L^0},\widehat{L^1}+\widehat{T} \ra+(\widehat{L^1}+\widehat{T},\widehat{K^0})+\la \widehat{T},\widehat{L^1} \ra-( \widehat{K^0}+\widehat{T},\widehat{K^1} )-(\widehat{L^1}+\widehat{T},\widehat{L^0})-\la \widehat{K^0},\widehat{T} \ra-2\left(\la\widehat{M^0_1},\widehat{T_2}+\widehat{N_2^1}\ra+\la \widehat{T_1},\widehat{N_2^1} \ra+\la \widehat{E_1},\widehat{E_2}+\widehat{N_2^0} \ra+\la \widehat{E_1},\widehat{N_2^0} \ra\right)$. 
    \iffalse  \begin{eqnarray*}
     %row1
     &n_2=&\!\la \delta^0,\widehat{K} \ra\!+\!\la \delta^1,\widehat{L} \ra\!-\!\la \delta^0,\widehat{L} \ra\!-\!\la \delta^1,\widehat{K} \ra\!+\!\la M^0,N^1 \ra\!+\!\la M^1,N^0 \ra\!-\!\la Y^1,\delta^0 \ra\!-\!\la X^1,\delta^1\ra\!-\!\la \delta^0,\widehat{N^1} \ra\!\\[1mm]
     %row2
     &&-\!\la \delta^1,\widehat{N^0} \ra+\la \widehat{K^1},\widehat{K^0}+\widehat{T} \ra+\la \widehat{L^0},\widehat{L^1}+\widehat{T} \ra+(\widehat{L^1}+\widehat{T},\widehat{K^0})+\la \widehat{T},\widehat{L^1} \ra-( \widehat{K^0}+\widehat{T},\widehat{K^1} )\\[1mm]
     %row3
     &&-(\widehat{L^1}+\widehat{T},\widehat{L^0})-\la \widehat{K^0},\widehat{T} \ra-2\left(\la\widehat{M^0_1},\widehat{T_2}+\widehat{N_2^1}\ra+\la \widehat{T_1},\widehat{N_2^1} \ra+\la \widehat{E_1},\widehat{E_2}+\widehat{N_2^0} \ra+\la \widehat{E_1},\widehat{N_2^0} \ra\right)\\
     \end{eqnarray*}…. \fi
     
     On the other hand, the right hand side of equation (\ref{eqn:coproduct is alg morphism}) is equal to 
     \begin{align*}
     %row1
     &\Delta([C^\star_{X_1}\oplus C_{Y_1}])\ast \Delta([C^\star_{X_2}\oplus C_{Y_2}])\\
     %row2
     &=\Bigl(\sum\limits_{\substack{X^0_1,T_1^0,X_1^1\\Y^0_1,Y_1^1}}a_{T_1^0}^{-1}\upsilon^{m_1}h^{X_1}_{X^1_1T^0_1X^0_1}h^{Y_1}_{Y^0_1T^0_1Y_1^1}\frac{a_{X_1}}{a_{X_1^1}a_{X_1^0}}\frac{a_{Y_1}}{a_{Y_1^0}a_{Y_1^1}}[C^\star_{X_1^0}\oplus C_{Y^0_1}]K_{Y^1_1+T_1^0}\otimes[C^\star_{X_1^1}\oplus C_{Y_1^1}]\cdot\\
     %row3
    &K^\star_{X_1^0+T_1^0}\Bigr)\ast\Bigl(\sum\limits_{\substack{X^0_2,T_2^1,X_2^1\\Y^0_2,Y_2^1}}a_{T_2^1}^{-1}\upsilon^{m_2}h^{X_2}_{X^1_2T_2^1X^0_2}h^{Y_2}_{Y^0_2T_2^1Y_2^1}\frac{a_{X_2}}{a_{X_2^1}a_{X_2^0}}\frac{a_{Y_2}}{a_{Y_2^0}a_{Y_2^1}}[C^\star_{X_2^0}\oplus C_{Y^0_2}] K_{Y^1_2+T_2^1}\otimes[C^\star_{X_2^1}\\
     %row4
    &\oplus C_{Y_2^1}]K^\star_{X_2^0+T_2^1}\Bigr)\\
    %row5
     &=\sum\limits_{\substack{X^0_1,T_1^0,X_1^1\\Y^0_1,Y_1^1}}\sum\limits_{\substack{X^0_2,T_2^1,X_2^1\\Y^0_2,Y_2^1}}\upsilon^{m_3}h^{X_1}_{X^1_1T_1^0X^0_1}h^{Y_1}_{Y^0_1T_1^0Y_1^1}h^{X_2}_{X^1_2T_2^1X^0_2}h^{Y_2}_{Y^0_2T_2^1Y_2^1}\\
     %row6
     &\cdot\left(\sum_{\substack{\delta^0_1,\delta^1_1\in K_0(\cA)\\K^0,L^0}}\sum_{\substack{T_1^1,S^1,M_2^0,M_1^1,\\ N_1^0,N_2^1\in Iso(\cA) \\\widehat{T_1^1}=\delta_1^0, \widehat{S^1}=\delta_1^1}}h_{N_2^1S^1}^{X_2^0}h_{M_2^0N_2^1}^{K^0}h_{T_1^1M_2^0}^{X_1^0}h_{N_1^0T_1^1}^{Y_2^0}h_{M_1^1N_1^0}^{L^0}h_{S^1M_1^1}^{Y_1^0}\frac{a_{X^0_1}a_{X_2^0} a_{Y^0_1}a_{Y_2^0}}{a_{M_2^0}a_{M_1^1}a_{N_1^0}a_{N_2^1}a_{T_1^1}a_{S^1}} \right)\\[2.5mm]
     %row7
     &\cdot\left(\sum_{\substack{\delta^0_2,\delta^1_2\in K_0(\mathcal{A})\\K^1,L^1}}\sum_{\substack{S^0,T_2^0,M_1^0,M_2^1,\\ N_2^0,N_1^1\in Iso(\cA) \\\hat{S}_2^0=\delta_2^0, \hat{S}_2^1=\delta_2^1}}h_{N_1^1T_2^0}^{X_2^1}h_{M_1^0N_1^1}^{K^1}h_{S^0M_1^0}^{X_1^1}h_{N_2^0S^0}^{Y_2^1}h_{M_2^1N_2^0}^{L^1}h_{T_2^0M_2^1}^{Y_1^1}\frac{a_{X_2^1}a_{X_1^1}a_{Y_2^1}a_{Y_1^1}}{a_{M_1^0}a_{M_2^1}a_{N_2^0}a_{N_1^1}a_{S^0}a_{T_2^0}}\right)\\[2.5mm]
     %row8
     &\frac{a_{X_1}a_{Y_1}a_{X_2}a_{Y_2}}{a_{X_1^1}a_{X_1^0}a_{Y_1^0}a_{Y_1^1}a_{X_2^1}a_{X_2^0}a_{Y_2^0}a_{Y_2^1}a_{T_1^0}a_{T_2^1}}K_{\widehat{S^0}+\widehat{L^1}+\widehat{T}}\ast K^\star_{\widehat{S^1}}\ast[C^\star_{K^0}\oplus C_{L^0}]\otimes K_{\widehat{S^0}}\ast K^\star_{\widehat{S^1}+\widehat{K^0}+\widehat{T}}[C^\star_{K^1}\\[2.5mm]
     &\oplus C_{L^1}]\\[2.5mm]
    \end{align*}
    \begin{align}
     %row8
     &=\sum\limits_{\substack{S^0,S^1}}\sum\limits_{\substack{M^0_1,M^0_2\\N^1_1,N^1_2}}\sum\limits_{\substack{M^1_1,M^1_2\\N^0_1,N^0_2}}\sum\limits_{T_1^0,T_1^1,T_2^0,T_2^1}\upsilon^{m_3}h^{X_1}_{S^0M_1^0T_1^0T_1^1M_2^0}h^{Y_1}_{S^1M_1^1T_1^0T_2^0M_2^1}h^{X_2}_{N^1_1T_2^0T_2^1N^1_2S^1}h^{Y_2}_{N^0_1T_1^1T_2^1N^0_2S^0}\label{eqn: r-4.1}\\
     %row9
     &\cdot\frac{a_{K^0}a_{K^1}a_{L^0}a_{L^1}a_{X_1}a_{X_2}a_{Y_1}a_{Y_2}}{a_{M_1^0}a_{M_2^0}a_{N_1^1}a_{N_2^1}a_{M_1^1}a_{M_2^1}a_{N_1^0}a_{N_2^0}a_{T_1^0}a_{T_1^1}a_{T_2^0}a_{T_2^1}} K_{\widehat{S^0}+\widehat{L^1}+\widehat{T}} K^\star_{S^1}[C^\star_{K^0}\oplus C_{L^0}]\otimes K_{\widehat{S^0}}K^\star_{\widehat{S^1}+\widehat{K^0}+\widehat{T}}\notag\\[2.5mm]
     &\ast[C^\star_{K^1}\oplus C_{L^1}]\notag\\\notag
     \end{align}
     
     where,

     \begin{eqnarray*}
     %row1
     &m_1=&\la \wh{X_1^1},\wh{X_1^0}+\wh{T_1^0} \ra+\la \wh{Y^0_1},\wh{Y_1^1}+\wh{T_1^0} \ra+\la \wh{T_1^0},\wh{Y_1^1} \ra-( \wh{X_1^0}+\wh{T_1^0},\wh{Y_1^1} )-\la \wh{X_1^0},\wh{T_1^0} \ra,\\[1mm]
     %row2
     &m_2=&\la \wh{X_2^1},\wh{X_2^0}+\wh{T_2^1} \ra+\la \wh{Y^0_2},\wh{Y_2^1}+\wh{T_2^1} \ra+\la \wh{T_2^1},\wh{Y_2^1} \ra-( \wh{X_2^0}+\wh{T_2^1},\wh{Y_2^1} )-\la \wh{X_2^0},\wh{T_2^1} \ra,\\[1mm]
     %row3
     &m_3=&m_1+m_2+(\widehat{Y_2^1}+\widehat{T_2^1},X_2^0-Y_2^0)+(\widehat{Y_2^1}+\widehat{T_2^1},X_1^0-Y_1^0)+(\widehat{Y_1^1}+\widehat{T_1^0},\widehat{X_1^0}-\widehat{Y_1^0})\\[1mm]
     %row4
     &&+(\widehat{X_2^0}+\widehat{T_2^1},\widehat{Y_2^1}-\widehat{X_2^1})+(\widehat{X_2^0}+\widehat{T_2^1},\widehat{Y_1^1}-\widehat{X_1^1})+(\widehat{X_1^0}+\widehat{T_1^0},\widehat{Y_1^1}-\widehat{X_1^1})+2(\la T_1^1,\widehat{K^0} \ra\\[1mm]
     %row5
     &&+\la S^1,\widehat{L^0} \ra+\la M_2^0,N_2^1 \ra+\la M_1^1,N_1^0 \ra-\la T_1^1,\widehat{L^0} \ra-\la S^1,\widehat{K^0} \ra-\la Y^0_1,T_1^1 \ra-\la X^0_1,S^1\ra\\[1mm]
     %row6
     &&-\la T_1^1,\widehat{N_2^1} \ra-\la S^1,\widehat{N_2^0} \ra+\la S^0,\widehat{K^1} \ra+\la T_2^0,\widehat{L^1} \ra+\la M_1^0,N_1^1 \ra+\la M_2^1,N_2^0 \ra-\la S^0,\widehat{L^1} \ra\\
     &&-\la T_2^0,\widehat{K^1} \ra-\la Y^1_1,S^0 \ra-\la X^1_1,T_2^0\ra-\la S^0,\widehat{N_1^1} \ra-\la T_2^0,\widehat{N_2^0} \ra);\\
     \end{eqnarray*}

     Comparing equation (\ref{eqn: l-4.1}) and equation (\ref{eqn: r-4.1}), it suffices to prove 
     \[m_3=n_2.\]
     And we can obtain this identity directly by the relations in $K_0(\cA)$:
     \begin{eqnarray*}
     &\wh{X_1}&=\wh{S^0}+\wh{M_1^0}+\wh{T_1^0}+\wh{T_1^1}+\wh{M_2^1},\\
     &\wh{Y_1}&=\wh{S^1}+\wh{M_1^1}+\wh{T_1^0}+\wh{T_2^0}+\wh{M_2^1},\\
     &\wh{X_2}&=\wh{N_1^1}+\wh{T_2^0}+\wh{T_2^1}+\wh{N_2^1}+\wh{S^1},\\
     &\wh{Y_2}&=\wh{N_1^0}+\wh{T_1^1}+\wh{T_2^1}+\wh{N_2^0}+\wh{S^0},\\
     &\wh{T_1}&=\wh{T_1^0}+\wh{T_1^1}, \wh{T_2}=\wh{T_2^0}+\wh{T_2^1},\\
    % &\wh{E_1}&=\wh{T_1^0}+\wh{T_2^0}, \wh{E_2}=\wh{T_1^1}+\wh{T_2^1}.
     \end{eqnarray*}

\end{sloppypar}
\end{document}